\numberwithin{equation}{section}
\newtheorem{theorem}{Theorem}
\newtheorem{lemma}{Lemma}
\newtheorem{corollary}{Corollary}
\theoremstyle{definition}
\newtheorem{rem}{Remark}
\newcommand{\cC}{\mathcal C}
\newcommand{\cR}{\mathcal R}
\newcommand{\cP}{\mathcal P}
\newcommand{\cU}{\mathcal U}
\newcommand{\cV}{\mathcal V}
\newcommand{\cW}{\mathcal W}
\newcommand{\cT}{\mathcal T}
\newcommand{\cD}{\mathcal D}
\newcommand{\cK}{\mathcal K}
\newcommand{\IR}{\mathbb R}
\newcommand{\vp}{\varphi}
\newcommand{\IP}{\mathbb P}
\DeclareMathOperator{\diam}{diam}
\title[cGFEM for hyperbolic integro-differential eqs.]
{Continuous Galerkin finite element methods for hyperbolic 
integro-differential equations}
\author[F.~Saedpanah]{Fardin Saedpanah}
\address{
Department of Mathematics, 
University of Kurdistan, P. O. Box 416, 
Sanandaj, Iran
}
\email{f.saedpanah@uok.ac.ir\\
           f\_saedpanah@yahoo.com}
\keywords{integro-differential equation, linear semigroup theory, 
continuous Galerkin finite element method, 
convolution kernel, stability, a priori estimate.}
\subjclass{65M60, 45K05}
\begin{document}

\begin{abstract}
A hyperbolic integro-differential equation is considered, as a model problem, 
where the convolution kernel is assumed to be either smooth or no worse 
than weakly singular. 
Well-posedness of the problem is studied in the 
context of semigroup of linear operators, and regularity of any order 
is proved for smooth kernels. 
Energy method is used to prove optimal order a priori error estimates 
for the finite element spatial semidiscrete problem. 
A continuous space-time finite element method of order one 
is formulated for the problem. Stability of the discrete dual problem 
is proved, that is used to obtain optimal order a priori estimates 
via duality arguments. The theory is illustrated by an example. 
\end{abstract}

\date{March 8, 2013}

\maketitle

\section{Introduction}
We consider, for any fixed $T>0$,  a hyperbolic type integro-differential 
equation of the form
\begin{equation}  \label{Problem}
  \ddot u+A u-\int_0^t \cK(t-s)Au(s)\,ds=f, \quad t \in (0,T), 
  \quad {\rm with} \ u(0)=u^0,\ \dot{u}(0)=u^1,
\end{equation}
(we use `$ \cdot $' to denote `$ \frac{\partial}{\partial t} $') 
where $A$ is a self-adjoint, positive definite, uniformly elliptic 
second order operator on a Hilbert space. 
The kernel $\cK$ is considered to be either smooth (exponential), 
or no worse than weakly singular, and in both cases with the properties that
\begin{align}  \label{KernelProperty}
  \cK\geq 0,\quad \dot\cK(t) \leq 0, \quad \|\cK\|_{L_1(\IR^+)}=\kappa<1.
\end{align}

This kind of problems arise e.g., in the thoery of linear and fractional order 
viscoelasticity. 
For examples and applications of this type of problems see, e.g., 
\cite{RiviereShawWhiteman2007}, \cite{StigFardin2010}, 
and references therein.

For our analysis, we define a function $\xi$ by 
\begin{align}   \label{xi}
  \xi(t)=\kappa-\int_0^t\!\cK(s)\,ds
    =\int_t^\infty\!\cK(s)\,ds,
\end{align}
and, having \eqref{KernelProperty}, it is easy to see that
\begin{align}   \label{xiproperty}
  \begin{split}
    D_t\xi(t)=-\cK(t)<0,\quad
    \xi(0)=\kappa,\quad \lim_{t\to\infty}\xi(t)=0,\quad
    0<\xi(t)\le\kappa.
  \end{split}
\end{align}
Hence, $\xi$ is a completely monotone function, since
\begin{align*}
  (-1)^jD_t^j\xi(t)\ge 0,\quad t\in (0,\infty),\,j=0,1,2,
\end{align*}
and consequently $\xi\in L_{1,loc}[0,\infty)$ is a positive type kernel, 
that is, for any $T\ge 0$ and $\phi\in \cC([0,T])$,
\begin{align}   \label{positivetype}
  \int_0^T\!\int_0^t\!\xi(t-s)\phi(t)\phi(s)\,ds\,dt\ge 0 .
\end{align}

From the extensive literature on theoritical and numerical analysis 
for partial differential equations with memory, we mention 
\cite{RiviereShawWhiteman2007}, \cite{StigFardin2010}, 
\cite{AdolfssonEnelundLarsson2008}, 
\cite{McLeanThomee2010}, \cite{FardinarXiv:1205.0159}, 
and their references. 

The fractional order kernels, such as Mittag-Leffler type kernels in 
fractional viscoelasticity, interpolate between smooth (exponential) 
kernels and weakly singular kernels, that are singular at origin but 
integrable on finite time intervals $(0,T)$, for any $T \geq 0$, see 
\cite{FardinarXiv:1203.4001} and references therein.  
This is the reason for considering problem \eqref{Problem} with 
convolution kernels satisfying \eqref{KernelProperty}. 

In \cite{StigFardin2010} well-posedness of a problem, similar to 
\eqref{Problem} with a Mittag-Leffler type kernel, 
was studied in the framework of the linear semigroup 
theory. 
Here we first extend the theory to prove 
higher regularity of the solution for more smooth kernels, 
such that  a priori error estimates are fulfilled. 
We prove $L_{\infty}(L_2)$ optimal 
order a priori error estimate, by energy methods, for finite element 
spatial semidiscrete approximate solution. 
This provides an alternative proof to what 
we presented in \cite{StigFardin2010}, and is straightforward.
The  continuous space-time finite element method of order one, cG(1)cG(1), 
is used to formulate the fully dicrete problem.  
A similar method has been applied to the wave equation in 
\cite{BangerthGeigerRannacher2010}, where adaptive methods based on 
dual weighted residual (DWR) method has been studied. 
An energy identity is proved for the discrete dual 
problem, using the positive type auxiliary function $\xi$. 
This is then used to prove $L_{\infty}(L_2)$ and $L_{\infty}(H^1)$ 
optimal order a priori error estimates by duality.  
This and  \cite{FardinarXiv:1205.0159}, 
where a posteriori error analysis of this method has been studied 
via duality, complete the error analysis of this method for 
model problems similar to \eqref{Problem}.  

The present work also extend previous works, e.g., 
\cite{AdolfssonEnelundLarsson2008}, \cite{AdolfssonEnelundLarsson2004}, 
\cite{ShawWhiteman2004}, on quasi-static fractional order viscoelsticity 
$(\ddot{u}\approx 0)$ to the dynamic case. 
Spatial finite element approximation of integro-differential equations  
similar to \eqref{Problem} have been studied in 
\cite{AdolfssonEnelundLarssonRacheva2006} and 
\cite{LinThomeeWahlbin1991}, however, for optimal order 
$L_\infty(L_2)$ a priori error estimate for the solution $u$, 
they require  one extra time derivative regularity of the solution.
A dynamic model for viscoelasticity based on internal variables 
is studied in \cite{RiviereShawWhiteman2007}. 
The memory term generates a growing amount of 
data that has to be stored and used in each time step. This can be dealt with by 
introducing ``sparse quadrature'' in the convolution term 
\cite{SloanThomee1986}. 
For a different approach based on ``convolution quadrature'', see \cite{SchadleLopezLubich2008}. 
However, we should note that this is not an issue for exponentially decaying 
memory kernels, in linear viscoelasticity, that are represented as a Prony series.  
In this case recurrence relationships can be derived which means 
recurrence formula are used for history updating, 
see \cite{ShawWhiteman2004}  
and  \cite{KaramanouShawWarbyWhiteman2005} for more details. 
In practice, the global regularity needed for a priori error analysis is not 
present, e.g., due to the mixed boundary conditions, that calls for 
adaptive methods based on a posteriori error analysis. 
We plan to address these issues in future work.

In the sequel, in $\S2$, well-posedness of the problem is proved and 
high regularity of the solution of the problem with smooth kernels is verified. 
In $\S3$, the spatial finite element discretization is studied and, 
using energy method, optimal order a priori error estimates are proved. 
The continuous space-time finite element method of order one is applied to 
the problem in $\S4$, and stability estimates for the discrete dual 
problem are obtained. These are then used to prove optimal order 
 a priori error estimates in $\S5$  by duality. Finally, in $\S6$, 
 we illustrate the theory by a simple example. 

\section{Well-posedness and regularity}      
We use the semigroup theory of linear operators to show that 
there is a unique solution of \eqref{Problem},  and we prove that 
under appropriate assumptions on the data we get higher regularity 
of the solution.  
In $\S2.1$ we quote the main framework from \cite{StigFardin2010}, 
to prove existence and uniqueness, to be complete. 
Here we restrict to pure homogeneous  Dirichlet boundary condition, 
though the presented framework applies also to mixed homogeneous  
Dirichlet-Neumann boundary conditions. But it does not admit 
mixed homogeneuos Dirichlet nonhomogeneous Neumann boundary 
conditions, and this case has been  studied in \cite{FardinarXiv:1203.4001} 
for a more general problem, by means of Galerkin approximation method. 
Then in $\S2.2$ we extend the semigroup framework to prove regularity 
of any  order for models with smooth kernels. To this end, we specialize 
to the  homogeneous  Dirichlet boundary condition.  

\subsection{Existence and uniqueness}      
We let $\Omega \subset\mathbb{R}^d$, be a bounded convex 
domain with smooth boundary $\partial\Omega$. 
In order to describe the spatial regularity of functions, 
we recall the usual Sobolev spaces $H^s=H^s(\Omega)^d$ 
with the corresponding norms and inner products, 
and we denote  $H=H^0=L_2(\Omega)^d,\ V=H_0^1(\Omega)^d$. 
We equip $V$ with the energy inner product 
$a(u,v)=(Au,v)$ and norm $\|v\|_V^2=a(v,v)$. 
We recall that $A$ is a selfadjoint, positive definite, 
unbounded linear operator, with 
$\cD(A)=H^2\cap V$, and we use the norms 
$\|v\|_s=\|A^{s/2}v\|$. 
We note that with  mixed homogeneous  Dirichlet-Neumann 
boundary conditions, we have  
\begin{equation*}
  V=\{v \in H^1 : v=0 \text{\ on Dirichlet boundary}\}.
\end{equation*}

We extend $u$ by $u(t)=h(t)$ for $t<0$ with $h$ to be chosen.
By adding $-\int_{-\infty}^0 K(t-s)Ah(s)\,ds$ to both sides of
\eqref{Problem}, changing the variables in the convolution terms and defining
$w(t,s)=u(t)-u(t-s)$, we get
\begin{equation} \label{ReformedProblem}
   \ddot u(t)+(1-\kappa) Au(t)
   +\int_0^\infty\! \cK(s) A w(t,s)\,ds
   = f(t)-\int_t^{\infty}\! \cK (s)Ah(t-s)\,ds,
\end{equation}
where, we recall that $\|\cK\|_{L_1(\IR^+)}=\kappa<1$. 
For latter use, we note that equation \eqref{Problem} can be retained from 
\eqref{ReformedProblem} by backward calculations.

For a given integer number $r \geq 0$, we use the Taylor expansion 
of order $r$ of the solution $u$ at  $t=0$ to define the extension $u(t)=h_r(t)$ for $t<0$. 
That is, we set 
\begin{equation}  \label{h-r}
  u(t)=h_r(t)=\sum_{n=0}^r \frac{t^n}{n!} u^n(0),\quad t<0,
\end{equation}
where we use the notation $u^n(t)=u^n(t,\cdot)=\frac{\partial^n}{\partial t^n}u(t,\cdot)$, 
with  $u^0(t)=u(t)$. 

Now we reformulate the model problem \eqref{Problem} 
to an abstract Cauchy problem. 
First, we choose $r=0$ in \eqref{h-r}, that is $h_0(t)=u^0$, and 
for the initial data we assume that $u^0 \in \cD(A)$ and $u^1 \in V$. 
Therefore, from \eqref{ReformedProblem}, we have
\begin{equation}\label{eq13}
    \ddot u(t)+(1-\kappa)Au(t)
      +\int_0^\infty\! \cK (s)Aw(t,s)\,ds
      =f(t) - Au^0\int_t^\infty\! \cK (s)\ ds,
\end{equation}
where,
\begin{equation*}
  w(t,s)=
  \left\{
  \begin{aligned}
    &u(t)-u(t-s),&&s\in[0,t],\\
    &u(t)-u^0,&&s\in[t,\infty).
  \end{aligned}
  \right.
\end{equation*}
Then we write \eqref{eq13}, together with the initial conditions, 
as an abstract Cauchy problem and prove well-posedness. 

We set $v=\dot u$ and define the Hilbert spaces
\begin{equation*}
  \begin{split}
    W&=L_{2, \cK }\big(\IR^+;V\big)
        =\Big\{w: \lVert w \rVert_W^2
          =\int_0^\infty\! \cK (s)\lVert w(s)\rVert_V^2\,ds<\infty\Big\},\\
    Z&=V\times H\times W
      =\Big\{z=(u,v,w):
       \lVert z \rVert_Z^2=(1-\kappa)\lVert u\rVert_V^2+
          \lVert v\rVert^2+\lVert w \rVert_W^2<\infty\Big\}.
  \end{split}
\end{equation*}
We also define the linear operator $\mathcal{A}$ on $Z$
such that, for
$z=(u,v,w),$
\begin{equation*}
  \mathcal{A}z=\bigg(v\,,\,
     -A\Big((1-\kappa) u+\int_0^\infty \! \cK(s)w(s)\,ds\Big)
                \,,\,v-Dw\bigg),
\end{equation*}
with domain of definition
\begin{equation*}
  \mathcal{D}(\mathcal{A})
  =\Big\{(u,v,w)\in Z: 
  v\in V,\,
    (1-\kappa) u+\int_0^\infty\! \cK(s)w(s)\,ds\in\mathcal{D}(A),
     w\in \mathcal{D}(D)\Big\}.
\end{equation*}
Here $Dw=\frac{d}{ds}w$ with 
$  \mathcal{D}(D)=\{w\in W : Dw\in W\,\,{\rm and}\,\,w(0)=0\}.$

Therefore, a solution of \eqref{Problem} satisfies the 
system of delay differential equations, for $t \in (0,T)$,
\begin{equation*}
  \begin{split}
    \dot u(t)&=v,\\
    \dot v(t)&=-A\Big((1-\kappa) u(t)
      +\int_0^\infty \! \cK(s)w(t,s)\,ds\Big)
      +f(t) - Au^0\int_t^\infty\! \cK (s)\ ds,\\
    \dot w(t,s)&=v(t)-D w(t,s),\quad s \in (0,\infty).
  \end{split}
\end{equation*}
This can be writen as the abstract Cauchy problem
\begin{equation}\label{abstractCauchyproblem}
  \begin{split}
     &\dot z(t)=\mathcal{A}z(t)+F(t),\quad t \in (0,T),\\
     &z(0)=z^0, 
  \end{split}
\end{equation}
where $F(t)=(0,f(t) - Au^0\int_t^\infty\! \cK (s)\ ds,0)$ and
$z^0=\big(u^0, u^1,0\big)$, since
\begin{equation*}
   w(0,s)=u(0)-u(-s)=u(0)-h(-s)=u^0-u^0=0.
\end{equation*}
We note that $ w(t,0)= u(t)- u(t)=0$, 
so that $ w(t,\cdot)\in \mathcal{D}(D)$.

We quote from \cite[Theorem 2.2]{StigFardin2010}, that 
$\mathcal{A}$ generates a $C_0$-semigroup of cotractions on $Z$. 
\begin{corollary} 
The linear operator $\mathcal{A}$ is an infinitesimal generator of a 
$C_0$-semigroup $e^{t\mathcal{A}}$ of contractions on the Hilbert space $Z$. 
\end{corollary}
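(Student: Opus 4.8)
The plan is to apply the Lumer--Phillips theorem: on the Hilbert space $Z$, a densely defined operator generates a $C_0$-semigroup of contractions provided it is dissipative and that $I-\mathcal{A}$ has full range. Accordingly I would verify three things in turn: that $\mathcal{D}(\mathcal{A})$ is dense in $Z$, that $(\mathcal{A}z,z)_Z\le 0$ for every $z\in\mathcal{D}(\mathcal{A})$, and that the range of $I-\mathcal{A}$ is all of $Z$. Closedness of $\mathcal{A}$ then follows automatically and is not needed as a separate hypothesis.

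For dissipativity I would compute $(\mathcal{A}z,z)_Z$ directly for $z=(u,v,w)\in\mathcal{D}(\mathcal{A})$, using the inner product on $Z$ associated with $\|\cdot\|_Z$, namely the sum of $(1-\kappa)a(\cdot,\cdot)$ on $V$, the $H$-inner product on the middle slot, and $\int_0^\infty\cK(s)\,a(\cdot,\cdot)\,ds$ on $W$. Pairing the three components of $\mathcal{A}z$ against $(u,v,w)$, the term $(1-\kappa)a(v,u)$ from the first slot cancels the contribution $-(1-\kappa)a(u,v)$ coming from the $Au$ part of the second slot, and the two copies of $\int_0^\infty\cK(s)\,a(v,w(s))\,ds$ cancel likewise; both cancellations rest on the symmetry of $a$. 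What remains is
\begin{equation*}
  (\mathcal{A}z,z)_Z=-\int_0^\infty\!\cK(s)\,a(Dw(s),w(s))\,ds
  =-\frac12\int_0^\infty\!\cK(s)\,\frac{d}{ds}\|w(s)\|_V^2\,ds.
\end{equation*}
Integrating by parts, using $w(0)=0$ from $\mathcal{D}(D)$ and the decay of $\cK(s)\|w(s)\|_V^2$ as $s\to\infty$, this equals $\tfrac12\int_0^\infty\dot\cK(s)\|w(s)\|_V^2\,ds$, which is $\le 0$ because $\dot\cK\le 0$ by \eqref{KernelProperty}. Hence $\mathcal{A}$ is dissipative.

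For surjectivity I would solve $(I-\mathcal{A})(u,v,w)=(\tilde u,\tilde v,\tilde w)$ componentwise for arbitrary data in $Z$. The first and third components give $v=u-\tilde u$ and the linear first-order ODE $w+Dw=\tilde w+v$ in $s$ with $w(0)=0$, which I solve by variation of parameters to obtain $w(s)=\int_0^s e^{-(s-\sigma)}\tilde w(\sigma)\,d\sigma+(u-\tilde u)(1-e^{-s})$. Substituting $v$ and $w$ into the second component reduces the whole system to the elliptic equation $\big(I+(1-\kappa+c)A\big)u=[\text{data}]$, where $c=\int_0^\infty\cK(s)(1-e^{-s})\,ds\in(0,\kappa)$. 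Since $A$ is self-adjoint and positive definite and $1-\kappa+c>0$, the operator $I+(1-\kappa+c)A$ is boundedly invertible, yielding a unique $u\in\mathcal{D}(A)$, and from it $v$ and $w$.

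The main obstacle I anticipate is bookkeeping at the level of domains rather than any single estimate: one must check that the reconstructed triple $(u,v,w)$ genuinely lies in $\mathcal{D}(\mathcal{A})$---that $w\in\mathcal{D}(D)\subset W$ with finite weighted norm and $w(0)=0$, and that the combination $(1-\kappa)u+\int_0^\infty\cK(s)w(s)\,ds$ lands in $\mathcal{D}(A)$---and, separately, that $\mathcal{D}(\mathcal{A})$ is dense in the infinite-dimensional history space $W$, which calls for an approximation argument in the $w$-component together with justification of the boundary term at infinity in the integration by parts. Once these domain verifications are in place, Lumer--Phillips delivers the contraction semigroup $e^{t\mathcal{A}}$.
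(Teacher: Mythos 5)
Your proposal is correct in strategy, but note that the paper itself does not prove this corollary at all: it simply quotes the result from \cite[Theorem 2.2]{StigFardin2010}. What you have written is essentially a reconstruction of the proof in that cited reference --- the same Lumer--Phillips verification, with the same dissipativity computation (the cross terms $(1-\kappa)a(v,u)$ and $\int_0^\infty\cK(s)a\big(v,w(s)\big)\,ds$ cancelling by symmetry of $a$, leaving $\tfrac12\int_0^\infty\dot\cK(s)\|w(s)\|_V^2\,ds\le 0$ by \eqref{KernelProperty}) and the same explicit resolvent construction solving $Dw+w=\tilde w+v$, $w(0)=0$, by Duhamel and reducing to a coercive elliptic problem for $u$. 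So your route buys self-containedness where the paper buys brevity by citation. Three refinements to the points you flagged: (i) the elliptic reduction yields only $u\in V$, not $u\in\mathcal{D}(A)$ as you state, because the right-hand side involves $A\tilde u$ and $A\int_0^\infty\cK(s)\int_0^s e^{-(s-\sigma)}\tilde w(\sigma)\,d\sigma\,ds$ with these arguments merely in $V$; the problem must be posed weakly via Lax--Milgram, and then membership of $(1-\kappa)u+\int_0^\infty\cK(s)w(s)\,ds$ in $\mathcal{D}(A)$ --- which is all that $\mathcal{D}(\mathcal{A})$ requires --- falls out of the second resolvent equation itself, whose right-hand side $\tilde v-v$ lies in $H$; (ii) to verify $w\in W$ you need the monotonicity $\cK(s)\le\cK(\sigma)$ for $\sigma\le s$ (a consequence of $\dot\cK\le 0$) to bring the weight $\cK(s)^{1/2}$ inside the Duhamel integral before applying Young's convolution inequality, since $\tilde w\in W$ gives no pointwise control of $\|\tilde w(\sigma)\|_V$; (iii) the density of $\mathcal{D}(\mathcal{A})$ that you list as a separate obstacle is actually free: on a reflexive space, a dissipative operator satisfying the range condition $R(I-\mathcal{A})=Z$ is automatically densely defined (Pazy, Theorem 1.4.6), so no approximation argument in the $w$-component is needed. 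The boundary term at infinity in your integration by parts is justified along the lines you anticipate: for $w\in\mathcal{D}(D)$ the function $s\mapsto\cK(s)\|w(s)\|_V^2$ has a limit at infinity, which must vanish by integrability.
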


Now, we look for a strong solution of the initial value 
problem \eqref{abstractCauchyproblem}, that is, 
a function $ z$ which is differentiable a.e.\ on $[0,T]$ with
$\dot{ z}\in L_1((0,T);Z)$, if $ z(0)= z^0$, $ z(t)\in \mathcal{D}(\mathcal{A})$,
and 
$\dot{ z}(t)=\mathcal{A} z(t)+F(t)$ a.e.\ on $[0,T]$.

Recalling the assumptions $u^0 \in \cD(A)$ and $u^1 \in V$, 
we know that if $z=(u, v, w)$ be a strong solution of the abstract 
Cauchy problem \eqref{abstractCauchyproblem} with 
$ z^0=\big( u^0,u^1,0\big)$,  
then $ u$ is a solution of \eqref{Problem} 
by \cite[Lemma 2.1]{StigFardin2010}. 
Hence, to prove that there is a unique solution for \eqref{Problem}, 
we need to prove that there is a unique strong solution for 
\eqref{abstractCauchyproblem}. This has been proved in 
\cite[Theorem 2.2]{StigFardin2010}, 
if  $f:[0,T]\to H$ is Lipschitz continuous, using the fact that 
the linear operator $\mathcal{A}$ generates a $C_0$-semigroup 
of contractions on $Z$. 
Moreover, for some 
$C=C(\kappa,T)$, 
we have the regularity estimate, for $t \in [0,T]$,
\begin{equation}  \label{RegularityEstimate-1}
  \lVert  u(t)\rVert_V+\lVert\dot{ u}(t)\rVert 
  \le C\Big(\lVert A u^0\rVert+\lVert u^1\rVert
       +\int_0^t\!\lVert f\rVert \,ds\Big).
\end{equation}

\subsection{High order regularity}       
In order to prove higher regularity of order $r+1$ ($\ r \geq 1$), 
we assume that the bounded domain $\Omega$ is convex, 
and we specialize to the homogeneous Dirichlet 
boundary condition. Hence, the elliptic regularity estimate 
holds, that is
\begin{equation}   \label{ellipticreg}
  \| u\|_2 \le C\|Au\|,\quad  u\in \cD(A)=H^2 \cap V.
\end{equation}

We note that the case $r=0$ is the choice for \eqref{RegularityEstimate-1}. 
We substitute $h_r(t)$ from \eqref{h-r}, with $r\geq 1$, 
in \eqref{ReformedProblem}. 
Then, differentiating $\frac{\partial^r}{\partial t^r}$ and using the notation 
$u^r(t)=\frac{\partial^r}{\partial t^r}u(t)$, we have 
\begin{equation} \label{r-thDerivativeReformed}
  \begin{split}
    \ddot u^r(t)&+(1-\kappa) Au^r(t)
      +\int_0^\infty\!\!  \cK (s)Aw^r(t,s)\,ds\\
    &\qquad =f^r(t)-A\frac{\partial^r}{\partial t^r} 
                     \int_t^\infty \! \cK(s)\sum_{n=0}^{r}\frac{(t-s)^n}{n!}u^n(0)\ ds\\
    &\qquad =f^r(t)+A\sum_{n=0}^{r-1}u^n(0) \cK ^{r-n-1}(t)
                     -Au^r(0) \xi(t) \\
    &\qquad =:\tilde f^r(t),
  \end{split}
\end{equation}
with the initial data $u^r(0), u^{r+1}(0)$. 

Recalling the initial data $u(0)=u^0$ and $u^1(0)=u^1$, from \eqref{Problem}, 
we have $u^2(0)=f(0)-Au^0 $. 
To obtain $u^m(0),\ m\geq 3$, we differentiate 
$\frac{\partial^{m-2}}{\partial t^{m-2}}$ of equation \eqref{Problem}, 
and we have
\begin{equation}  \label{r-thDerivative}
  \begin{aligned}
     \ddot u^{m-2}(t)+A u^{m-2}(t)&-\int_0^t \cK^{m-2}(t-s)Au(s)\,ds\\
     &=f^{m-2}(t) +A\sum_{n=0}^{m-3}u^n(t)\cK^{m-n-3}(0), 
     \quad t\in(0,T),
  \end{aligned}
\end{equation} 
that, with $t=0$, implies the initial condition
\begin{equation}  \label{Initial-r2}
  \begin{aligned}
    &u^{m}(0)=f^{m-2}(0)-Au^{m-2}(0)
    +\sum_{n=0}^{m-3}Au^{n}(0)\cK^{m-n-3}(0),
       &&m \geq 3.
  \end{aligned}
\end{equation} 

Throughout, obviously any sum $\sum_{n=i}^j$ is supposed to be suppressed 
from the formulas, when $i>j$. 
\begin{rem} \label{SobolevInequality}
We note that, if we assume $\cK \in W_1^{m-2}(0,T)$, 
then $\cK \in \mathcal{C}^{m-3}[0,T]$ by Sobolev inequality, 
and therefore $u^{m}(0)$ is well-defined.  
\end{rem}

\begin{rem} \label{InitialInequality}
One can show, by induction and the fact that by \eqref{ellipticreg}
\begin{equation} \label{InitialInequality-0}
  \|v\| \leq \|v\|_{H^2} \leq C\|Av\|, \quad \text{for}\ v \in \mathcal{D}(A), 
\end{equation}
we have $(m=0,1,\ k=1,2,\cdots)$,
\begin{equation} \label{InitialInequality-even}
  \begin{split}
    | A^m u^{2k}(0)| &\leq C\Big( |A^{m+k}u^0| + |A^{m+k-1}u^1| \\
      &\quad 
      + \sum_{j=0}^{k-1}|A^{m+j}f^{2k-2j-2}(0)| 
       + \sum_{j=1}^{k-2}|A^{m+j}f^{2k-2j-3}(0)|\Big),
  \end{split}
\end{equation}
\begin{equation} \label{InitialInequality-odd} 
  \begin{split}
    | A^m u^{2k+1}(0)| &\leq C\Big( |A^{m+k}u^0| + |A^{m+k}u^1| \\
      &\quad 
      + \sum_{j=1}^{k-1}|A^{m+j}f^{2k-2j-2}(0)|   
       + \sum_{j=0}^{k-1}|A^{m+j}f^{2k-2j-1}(0)|\Big).
  \end{split}
\end{equation}

\end{rem}

Now we note that, in \eqref{r-thDerivativeReformed}, we have
\begin{equation*}
   w^r(t,s)=
  \left\{
  \begin{aligned}
    & u^r(t)- u^r(t-s),&&s\in[0,t],\\
    & u^r(t)- u^r(0),&&s\in[t,\infty),
  \end{aligned}
  \right.
\end{equation*}
so that $w^r(t,0)=0$. Therefore, considering continuty of $w^r$, 
we have $w^r \in \mathcal{D}(D)$. 

Then, in the same way as in the previous section, with $v^r= \dot u^r$, 
we can reformulate \eqref{r-thDerivativeReformed}, with $z^r=(u^r,v^r,w^r)$, as 
the abstract Cauchy problem
\begin{equation} \label{abstractCauchyproblem-r}
  \begin{split}
     &\dot z^r(t)=\mathcal{A}\, z^r(t)+F^r(t),\quad 0<t<T,\\
     &z^r(0)= z^{r,0},
  \end{split}
\end{equation}
where 
$F^r(t)=(0,\tilde f^r(t),0)$ and
$z^{r,0}=(u^r(0),u^{r+1}(0),0)$, 
since 
$w^r(0,s)=u^r(0)- u^r(0)=0$.

In particular, for $r=1$, we have 
\begin{equation*}
  F^1(t)=\Big(0,f^1(t)+Au^0 \cK(t)-Au^1 \xi(t),0\Big),
\end{equation*}  
with initial data 
$z^{1,0}=(u^1,u^2(0),0)=(u^1,f(0)-Au^0,0)$.

Now, we need to show that from a strong solution of the abstract 
Cauchy problem \eqref{abstractCauchyproblem-r}, for $r \geq 1$, 
we get a solution of the main problem \eqref{Problem}. 
Therfore we should prove that the abstract 
Cauchy problem \eqref{abstractCauchyproblem-r} has a unique 
strong solution, under certain conditions on the data. 
The proof is by induction, and therefore we recall some facts from 
\cite{StigFardin2010}, for $r=1$. 
\begin{lemma} \label{lemma-r:1}
Let $ z^1=( u^1, v^1, w^1)$ be a strong solution of the abstract 
Cauchy problem \eqref{abstractCauchyproblem-r} with 
$ z^{1,0}=\big( u^1,u^2(0),0\big)$. 
Then $ u(t)= u^0+\int_0^t u^1(s)\,ds$ is a solution of \eqref{Problem}.
\end{lemma}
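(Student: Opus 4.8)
The plan is to mimic the reduction from the abstract Cauchy problem to \eqref{Problem} carried out for $r=0$ in \cite[Lemma 2.1]{StigFardin2010}, but now at the level of one time derivative, and then to integrate once in $t$. First I would apply the same argument as in \cite[Lemma 2.1]{StigFardin2010} to the strong solution $z^1=(u^1,v^1,w^1)$ of \eqref{abstractCauchyproblem-r} with $r=1$. Reading off the three components of $\dot z^1=\mathcal{A}z^1+F^1$ gives $\dot u^1=v^1$, the momentum equation for $\dot v^1$ with forcing $\tilde f^1$, and the transport equation $\dot w^1(t,s)=v^1(t)-Dw^1(t,s)$ with $w^1(0,\cdot)=0$. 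Solving the transport equation along characteristics, exactly as in the cited lemma, identifies the memory component as $w^1(t,s)=u^1(t)-u^1(t-s)$ for $s\in[0,t]$ and $w^1(t,s)=u^1(t)-u^1(0)$ for $s\in[t,\infty)$. Substituting this back shows that the first component $u^1(t)$ satisfies the operator equation \eqref{r-thDerivativeReformed} with $r=1$, a.e.\ on $[0,T]$, together with the initial data $u^1(0)=u^1$ and $\dot u^1(0)=v^1(0)=u^2(0)=f(0)-Au^0$.

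Next I would define $u(t)=u^0+\int_0^t u^1(s)\,ds$, so that $\dot u=u^1$, $\ddot u=\dot u^1=v^1$, $u(0)=u^0$ and $\dot u(0)=u^1(0)=u^1$; thus the initial conditions of \eqref{Problem} hold by construction. The decisive observation is that \eqref{r-thDerivativeReformed} with $r=1$ is precisely the time derivative of the reformed equation \eqref{ReformedProblem} evaluated at $u$ with the Taylor extension $h_1$. Indeed, differentiating \eqref{ReformedProblem} in $t$, using $\partial_t w(t,s)=u^1(t)-u^1(t-s)$ and applying the Leibniz rule to $\int_t^\infty\cK(s)Ah_1(t-s)\,ds$ --- whose lower-limit boundary term contributes $\cK(t)Au^0$ and whose interior derivative contributes $-\xi(t)Au^1$, since $\partial_t h_1(\tau)=u^1(0)=u^1$ --- reproduces exactly the right-hand side $\tilde f^1$ of \eqref{r-thDerivativeReformed}. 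Hence, writing $\Phi(t)$ for the left-hand side minus right-hand side of \eqref{ReformedProblem} evaluated at $u$, the first step gives $\dot\Phi\equiv 0$ on $[0,T]$.

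It then remains to check $\Phi(0)=0$, which is where the initial datum $u^2(0)=f(0)-Au^0$ enters. Evaluating \eqref{ReformedProblem} at $t=0$, the two extension integrals $\int_0^\infty\cK(s)Ah_1(-s)\,ds$ cancel, and the memory term $\int_0^\infty\cK(s)Aw(0,s)\,ds=\kappa Au^0-\int_0^\infty\cK(s)Ah_1(-s)\,ds$ combines with $(1-\kappa)Au^0$ to leave $\ddot u(0)+Au^0-f(0)$; this vanishes exactly because $\ddot u(0)=v^1(0)=u^2(0)=f(0)-Au^0$ by construction. Therefore $\Phi\equiv 0$, so $u$ solves \eqref{ReformedProblem} with the extension $h_1$, and by the backward calculation already noted after \eqref{ReformedProblem} it solves \eqref{Problem}. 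The main obstacle I anticipate is purely technical: justifying the differentiation under the convolution integral and the Leibniz boundary term in the Bochner/operator setting. This is legitimate because $z^1$ is a strong solution, so $\dot z^1\in L_1((0,T);Z)$ and the memory variable $w^1$ carries the regularity encoded in the $W$-norm, which is precisely what is needed to pass the $t$-derivative through $\int_0^\infty\cK(s)A\,(\cdot)\,ds$.
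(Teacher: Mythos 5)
Your outline is essentially the paper's own strategy: the paper does not prove Lemma \ref{lemma-r:1} in situ (it is recalled from \cite{StigFardin2010}), but the inductive step in the proof of Lemma \ref{lemma-r} follows exactly your pattern one level up --- read off the three component equations of the Cauchy problem, solve the transport equation along characteristics to identify the memory component, integrate once in $t$, and fix the integration constant using the compatibility datum \eqref{Initial-r2} (in your case $u^2(0)=f(0)-Au^0$). Your bookkeeping of initial values ($u(0)=u^0$, $\dot u(0)=u^1(0)=u^1$, $\ddot u(0)=v^1(0)=u^2(0)$) and your Leibniz computation reproducing $\tilde f^1(t)=f^1(t)+Au^0\cK(t)-Au^1\xi(t)$ are both correct.

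One step fails as literally written, however: the evaluation of $\Phi(0)$ via ``the two extension integrals $\int_0^\infty\cK(s)Ah_1(-s)\,ds$ cancel.'' Since $h_1(-s)=u^0-su^1$ grows linearly in $s$, each of these integrals --- and likewise the split memory term $\int_0^\infty\cK(s)Aw(t,s)\,ds$, whose integrand behaves like $s\,\cK(s)Au^1$ for $s>t$ --- converges only if $\cK$ has a finite first moment, which does not follow from the hypothesis $\cK\in W_1^1(\IR^+)$ of Theorem \ref{theorem-r:1} (consider $\cK(s)\sim s^{-2}(\log s)^{-1}$ as $s\to\infty$). So, with the extension $h_1$, the reformed equation \eqref{ReformedProblem} is not well defined term by term, and your cancellation at $t=0$ is an $\infty-\infty$ statement. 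The repair is cheap and is effectively what the paper does in the proof of Lemma \ref{lemma-r} by integrating the unreformed equation \eqref{r-thDerivative} instead of the split reformed one: combine the memory and extension integrals \emph{before} splitting,
\begin{equation*}
  \int_0^\infty\!\cK(s)Aw(t,s)\,ds+\int_t^\infty\!\cK(s)Ah_1(t-s)\,ds
  =\int_0^t\!\cK(s)A\big(u(t)-u(t-s)\big)\,ds+\xi(t)Au(t),
\end{equation*}
so that your residual becomes $\Phi(t)=\ddot u(t)+Au(t)-\int_0^t\cK(t-s)Au(s)\,ds-f(t)$, i.e.\ the residual of \eqref{Problem} itself, with all integrals over $[0,t]$ and no moment condition needed. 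Then $\dot\Phi=0$ is exactly the combined form of the $r=1$ equation satisfied by $u^1$, and $\Phi(0)=\ddot u(0)+Au^0-f(0)=0$ by the choice of $u^2(0)$. With this rearrangement your argument is complete and coincides with the paper's descent-by-one-derivative proof.
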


\begin{theorem} \label{theorem-r:1}
There is a unique solution $u=u(t)$ of
\eqref{Problem} if 
$\cK \in W_1^1(\IR^+)$ with $\|\cK\|_{W_1^1(\IR^+)}<1$, 
$f(0)-Au^0\in V,\ u^1\in \mathcal{D}(A)$, and
$\dot f:[0,T]\to H$ is Lipschitz continuous. 
Moreover,
for some $C=C(\kappa,T)$, we have the regularity estimate, 
for $t \in [0,T]$,
\begin{equation} \label{regularityestimate2}
  \lVert \dot u(t)\rVert_V+\lVert\ddot u(t)\rVert 
  \le C\Big(\lVert Au^0\rVert+
     \lVert Au^1 \rVert 
     + \lVert f(0) \rVert
      +\int_0^t\!\lVert \dot f\rVert \,ds\Big).
\end{equation}
\end{theorem}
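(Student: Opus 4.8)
The plan is to reduce the assertion to the semigroup machinery already established for $r=0$, applied now to the abstract Cauchy problem \eqref{abstractCauchyproblem-r} with $r=1$. The existence-and-uniqueness half follows the same blueprint as in \S2.1: by Corollary~1, the operator $\mathcal A$ generates a $C_0$-semigroup of contractions on $Z$, so the standard semigroup theory (exactly as quoted from \cite[Theorem 2.2]{StigFardin2010}) guarantees a unique strong solution $z^1=(u^1,v^1,w^1)$ of \eqref{abstractCauchyproblem-r} provided the initial datum lies in $\mathcal D(\mathcal A)$ and the forcing term $F^1$ is Lipschitz continuous into $Z$. First I would verify the membership $z^{1,0}=(u^1,f(0)-Au^0,0)\in\mathcal D(\mathcal A)$: the first slot needs $u^1\in V$, which follows from the stronger hypothesis $u^1\in\mathcal D(A)$; the second slot needs $f(0)-Au^0\in V$, which is assumed directly; and the third slot $w=0$ trivially satisfies the $\mathcal D(D)$ condition since $w(0)=0$. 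Then, having a strong solution of \eqref{abstractCauchyproblem-r}, Lemma~\ref{lemma-r:1} tells us that $u(t)=u^0+\int_0^t u^1(s)\,ds$ solves \eqref{Problem}, giving existence; uniqueness for \eqref{Problem} transfers back from the uniqueness of the strong solution.

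The more delicate point is the Lipschitz continuity of $F^1(t)=\bigl(0,\,f^1(t)+Au^0\cK(t)-Au^1\xi(t),\,0\bigr)$ as a map $[0,T]\to Z$, since this is what the generation theorem requires. The norm on $Z$ controls the middle slot through $\|\cdot\|$, so I would estimate the increments of each of the three summands. The term $f^1(t)$ is Lipschitz into $H$ precisely because $\dot f$ is assumed Lipschitz continuous. For the term $Au^0\cK(t)$, note $Au^0\in H$ (as $u^0\in\mathcal D(A)$) is a fixed vector, so its increments are controlled by the increments of $\cK$; since $\cK\in W_1^1(\IR^+)$, $\cK$ is absolutely continuous and hence Lipschitz on $[0,T]$. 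Similarly $Au^1\in H$ (as $u^1\in\mathcal D(A)$) is fixed and $\xi$ is Lipschitz by \eqref{xiproperty} since $D_t\xi=-\cK$ is bounded on $[0,T]$. I expect this verification of Lipschitz continuity — in particular identifying exactly which regularity hypotheses on $\cK$, $f$, and the initial data are needed — to be the main technical obstacle, and it is precisely what dictates the hypotheses $\cK\in W_1^1(\IR^+)$, $u^1\in\mathcal D(A)$, $f(0)-Au^0\in V$, and $\dot f$ Lipschitz.

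Finally, for the regularity estimate \eqref{regularityestimate2}, the strategy is to apply the $r=0$ estimate \eqref{RegularityEstimate-1} directly to the shifted system \eqref{abstractCauchyproblem-r}, reading $u^1$ in place of $u$ and $\dot u^1=u^2$ in place of $\dot u$. This yields
\begin{equation*}
  \|\dot u(t)\|_V+\|\ddot u(t)\|
  \le C\Big(\|Au^1\|+\|u^2(0)\|+\int_0^t\|\tilde f^1\|\,ds\Big),
\end{equation*}
after identifying $u^1=\dot u$ and $u^2=\ddot u$. It then remains to bound the right-hand side by the stated data: the term $\|Au^1\|$ appears as is; $\|u^2(0)\|=\|f(0)-Au^0\|\le\|f(0)\|+\|Au^0\|$ by the computed initial value $u^2(0)=f(0)-Au^0$; and the forcing integral $\int_0^t\|\tilde f^1\|\,ds$ is estimated using $\tilde f^1=f^1+Au^0\cK-Au^1\xi$ together with $\|\cK\|_{L_1(\IR^+)}=\kappa$ and $\xi\le\kappa$ from \eqref{xiproperty}, absorbing the convolution-kernel factors into the constant $C=C(\kappa,T)$. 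Collecting terms gives \eqref{regularityestimate2}.
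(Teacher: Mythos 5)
Your overall strategy coincides with the paper's: the paper proves this theorem in two lines, citing \cite[Theorem 2.4]{StigFardin2010} for the unique strong solution of \eqref{abstractCauchyproblem-r} with $r=1$ and then invoking Lemma \ref{lemma-r:1}; your proposal reconstructs the content of that citation, and your derivation of \eqref{regularityestimate2} from the contraction property $\lVert e^{t\mathcal{A}}\rVert_Z\le 1$ is exactly the computation the paper performs for general $r$ in step 5 of the proof of Theorem \ref{theorem-r}, specialized to $r=1$. One minor misreading first: in checking $z^{1,0}\in\mathcal{D}(\mathcal{A})$, the first component must satisfy $(1-\kappa)u^1+\int_0^\infty\cK(s)w(s)\,ds\in\mathcal{D}(A)$, which with $w=0$ forces $u^1\in\mathcal{D}(A)$ outright; so $u^1\in\mathcal{D}(A)$ is not ``stronger than needed'' for this slot but exactly what the domain condition demands (it is then used a second time, as you note, to make $Au^1$ a fixed vector of $H$ in $F^1$).

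The genuine flaw is in your treatment of the term $Au^0\cK(t)$: you assert that $\cK\in W_1^1(\IR^+)$ makes $\cK$ ``absolutely continuous and hence Lipschitz on $[0,T]$''. Absolute continuity does not imply Lipschitz continuity: $W_1^1$ gives only $\dot\cK\in L_1$, and a kernel behaving like $\cK(0)-2\sqrt{t}$ near $t=0$ lies in $W_1^1(0,T)$ without being Lipschitz; Lipschitz continuity of $t\mapsto Au^0\cK(t)$ would require $\dot\cK\in L_\infty(0,T)$, which is nowhere assumed. (By contrast, $\xi$ genuinely is Lipschitz here, since $D_t\xi=-\cK$ and $\cK\in W_1^1$ embeds into $\mathcal{C}[0,T]$, cf.\ Remark \ref{SobolevInequality}.) The repair is to drop the Lipschitz requirement on $F^1$ altogether: the generation result the paper actually uses for $r\ge 2$, namely \cite[Corollary 4.2.10]{Pazy}, yields a strong solution once $z^{1,0}\in\mathcal{D}(\mathcal{A})$ and $F^1$ is differentiable a.e.\ with $\dot F^1\in L_1([0,T];Z)$; here
\begin{equation*}
  \dot F^1(t)=\Big(0,\ \ddot f(t)+Au^0\,\dot\cK(t)+Au^1\,\cK(t),\ 0\Big),
\end{equation*}
and this lies in $L_1([0,T];Z)$ because Lipschitz continuity of $\dot f$ gives $\ddot f\in L_\infty\subset L_1$ a.e., while $\cK,\dot\cK\in L_1(\IR^+)$. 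With this substitution your argument closes, and the remainder of the proposal --- the transfer to \eqref{Problem} via Lemma \ref{lemma-r:1}, and the bound on $\int_0^t\lVert\tilde f^1\rVert\,ds$ using $\lVert\cK\rVert_{L_1(\IR^+)}=\kappa$ and $\xi\le\kappa$ --- is sound and matches the paper's computation.
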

\begin{proof}
There exists a unique strong solution $z^1=(u^1,v^1,w^1)$ 
for \eqref{abstractCauchyproblem-r}, with $r=1$, by 
\cite[Theorem 2.4]{StigFardin2010}. 
Hence, the proof is complete by Lemma \ref{lemma-r:1}.
\end{proof}
\begin{lemma} \label{lemma-r}
Let $z^r=(u^r,v^r,w^r)$, for $r\geq 1$, be a strong solution of 
the abstract Cauchy problem  \eqref{abstractCauchyproblem-r} 
with $z^{r,0}=(u^r(0),u^{r+1}(0),0)$. 
Then $ u(t)= u^0+\int_0^t u^1(s)\,ds$ is a solution of
\eqref{Problem}.
\end{lemma}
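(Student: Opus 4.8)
The plan is to prove Lemma \ref{lemma-r} by induction on $r$, using Lemma \ref{lemma-r:1} as the base case $r=1$. The central idea mirrors the relationship already established between the abstract Cauchy problem \eqref{abstractCauchyproblem-r} and the differentiated equation \eqref{r-thDerivativeReformed}: a strong solution $z^r=(u^r,v^r,w^r)$ has its first component $u^r$ satisfying \eqref{r-thDerivativeReformed}, which is precisely the $r$-th time derivative of the reformulated problem \eqref{ReformedProblem} with extension $h_r$. Since $u(t)=u^0+\int_0^t u^1(s)\,ds$ recovers the original solution at the base level, the inductive step must show that reconstructing $u$ from the $r$-th order data yields a genuine solution of \eqref{Problem}.

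First I would set up the induction hypothesis: assume that for every $k$ with $1\le k<r$, a strong solution $z^k=(u^k,v^k,w^k)$ of \eqref{abstractCauchyproblem-r} gives, through $u(t)=u^0+\int_0^t u^1(s)\,ds$, a solution of \eqref{Problem} whose derivatives satisfy $\frac{\partial^k}{\partial t^k}u=u^k$ and whose extension by $h_r$ for $t<0$ is consistent with \eqref{h-r}. Next I would take the strong solution $z^r$ and observe, from the definition of $\mathcal{A}$ and $F^r$, that its first component satisfies the evolution equation whose second-order-in-time form is exactly \eqref{r-thDerivativeReformed}, with the correct forcing $\tilde f^r$ and initial data $u^r(0),u^{r+1}(0)$ from \eqref{Initial-r2}. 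The key algebraic point is that \eqref{r-thDerivativeReformed} was obtained by applying $\frac{\partial^r}{\partial t^r}$ to \eqref{ReformedProblem}; hence integrating $u^r$ back $r$ times in $t$, with the initial data $u^n(0)$ fixed by \eqref{Initial-r2}, must reproduce a function satisfying \eqref{ReformedProblem}, and therefore \eqref{Problem} by the backward calculation noted after \eqref{ReformedProblem}.

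The main obstacle I expect is verifying that the back-integration is consistent, that is, that the lower-order initial conditions $u^n(0)$ built into the forcing term $\tilde f^r$ via \eqref{Initial-r2} match the values produced by repeatedly integrating $u^r$. Concretely, one must confirm that the terms $A\sum_{n=0}^{r-1}u^n(0)\cK^{r-n-1}(t)-Au^r(0)\xi(t)$ appearing in \eqref{r-thDerivativeReformed} are exactly the $r$-th derivative of the right-hand side $-A\int_t^\infty \cK(s)\sum_{n=0}^r\frac{(t-s)^n}{n!}u^n(0)\,ds$, and that this pins down the antiderivatives uniquely when the initial data \eqref{Initial-r2} are imposed. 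This bookkeeping, together with checking that $w^r$ agrees with $u^r(t)-u^r(t-s)$ (so the convolution history is the correct derivative of the history in \eqref{ReformedProblem}), is where care is needed; the rest follows from uniqueness of the solution to \eqref{Problem} guaranteed by Theorem \ref{theorem-r:1} and the semigroup framework.

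Once the reconstruction is shown to satisfy \eqref{ReformedProblem} with the extension $h_r$, the equivalence of \eqref{ReformedProblem} and \eqref{Problem} by backward calculation completes the inductive step, so that $u(t)=u^0+\int_0^t u^1(s)\,ds$ is a solution of \eqref{Problem}. I would therefore structure the write-up as: (i) state the induction hypothesis; (ii) identify the second-order equation satisfied by $u^r$ with \eqref{r-thDerivativeReformed}; (iii) integrate back and match initial data via \eqref{Initial-r2}; (iv) invoke the equivalence between \eqref{ReformedProblem} and \eqref{Problem} to conclude.
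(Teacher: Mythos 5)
Your proposal is correct in substance and follows essentially the same route as the paper: induction on $r$ with base case $r=1$ (via Theorem \ref{theorem-r:1}, which rests on Lemma \ref{lemma-r:1}), recovery of the history function $w^{r}(t,s)=u^{r}(t)-u^{r}(t-s)$ from the component equations of \eqref{abstractCauchyproblem-r}, back-integration in time with initial data matched through \eqref{Initial-r2}, and the equivalence of \eqref{ReformedProblem} and \eqref{Problem} by backward calculation. The one organizational difference worth flagging: the paper's inductive step integrates exactly \emph{once}, showing that a strong solution $z^{r+1}$ of \eqref{abstractCauchyproblem-r} yields a strong solution $z^{r}$ at level $r$ (with $w^{r}(t,s)=\int_0^t w^{r+1}(\tau,s)\,d\tau$) --- this single step is where the integration of \eqref{r-thDerivative}, the Fubini interchange in the double convolution integral of \eqref{Thm2-eq1}, and the identity \eqref{Initial-r2} are used --- after which the induction hypothesis performs all the remaining integrations down to level $0$. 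Your plan instead states an induction hypothesis for all $k<r$ but then proposes to integrate $u^{r}$ back $r$ times directly to \eqref{ReformedProblem}; as written, the hypothesis does no work, and the direct $r$-fold route would force you to repeat the Fubini and initial-data bookkeeping at every one of the $r$ levels, which is exactly the repetition the one-step descent packages away. Recasting your step (iii) as ``integrate once and verify that the next level down is again a strong solution of the abstract Cauchy problem, including the domain conditions on $w$'' makes your argument coincide with the paper's. Finally, your closing appeal to uniqueness from Theorem \ref{theorem-r:1} is unnecessary: the lemma is established by direct verification that the reconstructed $u$ satisfies \eqref{r-thDerivative}, hence \eqref{Problem}; no uniqueness enters the paper's proof.
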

\begin{proof}
The proof is by induction. The case $r=1$ follows from 
Theorem \ref{theorem-r:1}. 

Now, we assume that the lemma valids for some $r \geq 2$, 
and we prove that it holds also for $r+1$. 
To this end, we show that if $z^{r+1}=(u^{r+1},v^{r+1},w^{r+1})$ 
be a strong solution of \eqref{abstractCauchyproblem-r} (for $r+1$) 
with $z^{r+1,0}=(u^{r+1}(0),u^{r+2}(0),0)$, 
then $z^r=(u^r,v^r,w^r)$ is a strong solution of \eqref{abstractCauchyproblem-r} 
with $z^{r,0}=(u^r(0),u^{r+1}(0),0)$, that completes the proof by induction assumption. 

Since $\dot z^{r+1}(t)=\mathcal{A}\, z^{r+1}(t)+F^{r+1}(t)$ a.e.\ on $[0,T]$, 
we have, for $t \in (0,T)$,
\begin{equation*}
  \begin{split}
    &\dot u^{r+1}(t)=v^{r+1}(t),\\
    &\dot v^{r+1}(t)=-A\Big((1-\kappa) u^{r+1}(t)
     +\int_0^\infty\! \cK(s)w^{r+1}(t,s)\,ds\Big)
     +\tilde f^{r+1}(t),\\
    &\dot w^{r+1}(t,s)
     =v^{r+1}(t)-Dw^{r+1}(t,s),
      \  s\in(0,\infty).
  \end{split}
\end{equation*}
The first and the third equation implies that $w^{r+1}$ satisfies 
the first order partial differential equation 
\begin{equation*}
  w^{r+1}_t+w^{r+1}_s=u^{r+1}_t.
\end{equation*}
This, with $w^{r+1}(t,0)=0,\,w^{r+1}(0,s)=0$, 
has the unique solution 
$w^{r+1}(t,s)=u^{r+1}(t)-u^{r+1}(t-s)$, that implies,
by integration with respect to $t$, 
\begin{equation*}
  w^r(t,s)=u^r(t)-u^r(t-s)=\int_0^t\! w^{r+1}(\tau,s)\,d\tau.
\end{equation*}

From the first and the second equations we obtain equation  
\eqref{r-thDerivativeReformed} with $r+1$, that is obtained from equation 
\eqref{ReformedProblem} by differentiating $\frac{\partial^r}{\partial t^r}$. 
We recall that equations \eqref{Problem} and 
\eqref{ReformedProblem} are equivalent, that implies 
equivalence of equations \eqref{r-thDerivativeReformed} 
and \eqref{r-thDerivative}. 
Therefore $u$ also satisfies \eqref{r-thDerivative} with $r+1$. 
Then, integrating with respect to $t$, we have, for $t \in (0,T)$,
\begin{equation}  \label{Thm2-eq1}
  \begin{aligned}
     \ddot u^r(t)
     -\ddot u^r&(0)+A u^r(t)-A u^r(0)
       -\int_0^t \int_0^\tau \! \cK^{r+1}(\tau-s)Au(s)\ ds\ d\tau\\
     &=f^r(t) -f^r(0)
         +A\sum_{n=0}^{r} \int_0^t \! u^n(\tau)\ d\tau \cK^{r-n}(0).
  \end{aligned}
\end{equation} 
Now, we need to show that \eqref{Thm2-eq1} implies \eqref{r-thDerivative}. 
We note that
\begin{equation*} 
  \begin{aligned}
     \int_0^t \int_0^\tau \! \cK^{r+1}(\tau-s)Au(s)\ ds\ d\tau
     &=\int_0^t \int_s^t \! \cK^{r+1}(\tau-s)Au(s)\ d\tau \ ds\\
     &=\int_0^t \! \cK^{r}(t-s)Au(s)\ ds-A \cK^{r}(0)\int_0^t \! u(s)\ ds,
  \end{aligned}
\end{equation*} 
and 
\begin{equation*} 
  \begin{aligned}
     A\sum_{n=0}^{r} &\int_0^t \! u^n(\tau)\ d\tau \cK^{r-n}(0)\\
     &=A\bigg(
         \int_0^t\!u(\tau)\ d\tau \cK^r(0)
         +\sum_{n=1}^r \big(u^{n-1}(t)-u^{n-1}(0)\big) \cK^{r-n}(0)\bigg)\\
     &=A \cK^r(0)\int_0^t\!u(\tau)\ d\tau 
         +A\sum_{n=0}^{r-1}u^{n}(t) \cK^{r-n-1}(0)
         -A\sum_{n=0}^{r-1}u^{n}(0) \cK^{r-n-1}(0).
  \end{aligned}
\end{equation*} 
Using these and \eqref{Initial-r2} in \eqref{Thm2-eq1} we 
conclude \eqref{r-thDerivative}, that is equivalent to 
\eqref{r-thDerivativeReformed}. 
This means that, $z^r=(u^r,v^r,w^r)$ is a strong solution of 
\eqref{abstractCauchyproblem-r} with $z^{r,0}=(u^r(0),u^{r+1}(0),0)$. 
Hence, by induction assumption, $ u(t)= u^0+\int_0^t u^1(s)\,ds$ 
is a solution of \eqref{Problem}, 
and this completes the proof.
\end{proof}

In the next theorem we find the circumstances under which 
there is a unique strong solution of the abstract Cauchy problem 
\eqref{abstractCauchyproblem-r}, that by Lemma \ref{lemma-r} 
implies existence of a  unique solution of \eqref{Problem} 
with higher regularity.  
We also obtain regularity estimates, which are extensions of \eqref{RegularityEstimate-1} and \eqref{regularityestimate2}.

We note that, recalling Remark \ref{SobolevInequality} and having 
the assumptions from the next theorem, the 
 calculations in the proof of Lemma \ref{lemma-r} make sense. 
\begin{theorem} \label{theorem-r}
For a given integer number $r \geq 1$, 
let $f^r=\frac{\partial^r}{\partial t^r}f:[0,T]\to H$ be Lipschitz
 continuous and  $\cK \in W^r_1(\IR^+)$ with 
 $\|\cK\|_{W^r_1(\IR^+)}<1$. 
We also, recalling $\mathcal{D}(A)= H^{2}\cap V$, assume the 
following compatibility conditions:

\noindent
for $r=2k \,(k=1,2,\cdots)$, 
\begin{equation}\label{r-even}
  \begin{split}
    &A^ku^0\in \mathcal{D}(A),\quad A^k u^1\in V,\\
    &f^{r-2j}(0)\in H^{2j} \cap V, \quad 
       j=1,\cdots,\,k,\ k\geq 1,\\
    &f^{r-2j+1}(0)\in H^{2(j-1)} \cap V, \quad
       j=1,\cdots,\,k,\ k\geq 1,
  \end{split}
\end{equation}
and for $r=2k+1 \,(k=0,1,2,\cdots)$, 
\begin{equation}\label{r-odd}
  \begin{split}
    &A^k u^1\in \mathcal{D}(A),\quad A^{k+1} u^0\in V,\\
    &f^{r-2j}(0)\in H^{2j} \cap V, \quad j=1,\cdots,k,\ k\geq 1,\\
    &f^{r-2j+1}(0)\in H^{2(j-1)} \cap V, \quad j=1,\cdots,k+1,\ k\geq 0.
  \end{split}
\end{equation}
Then there is a unique solution of \eqref{Problem}.

Moreover, for some $C=C(\kappa,\|\cK\|_{W^r_1(\IR^+)},T)$:
 
\noindent
for $r=2k \,(k=1,2,\cdots)$, we have the regularity estimate
\begin{equation} \label{evengeneralregularityestimate}
  \begin{split}
    \lVert u^r(t) \rVert_V&+\lVert u^{r+1}(t)\rVert\\
    & \le C\bigg(\lVert A^{k+1}u^0\rVert +\lVert A^k u^1 \rVert \\
    &\quad  +\sum_{j=1}^k \lVert A^j f^{r-2j}(0)\rVert
               +\sum_{j=1}^k \lVert A^{j-1} f^{r-2j+1}(0)\rVert 
    +\int_0^t\!\lVert f^r(s)\rVert \,ds\bigg),
  \end{split}
\end{equation}
and,  for $r=2k+1 \,(k=0,1,2,\cdots)$, we have the estimate
\begin{equation} \label{oddgeneralregularityestimate}
  \begin{split}
    \lVert u^r(t) \rVert_V&+\lVert u^{r+1}(t)\rVert\\
    &\!\!\!\!\!\le C\bigg(\lVert A^{k+1}u^0\rVert+\lVert A^{k+1}u^1 \rVert \\
    &+\sum_{j=1}^k \lVert A^j f^{r-2j}(0)\rVert
       +\sum_{j=1}^{k+1} \lVert A^{j-1} f^{r-2j+1}(0)\rVert 
    +\int_0^t\!\lVert f^r(s)\rVert \,ds\bigg).
  \end{split}
\end{equation}
\end{theorem}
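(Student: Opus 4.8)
The plan is to reduce everything, exactly as in the case $r=1$ treated in Theorem~\ref{theorem-r:1}, to the abstract Cauchy problem \eqref{abstractCauchyproblem-r} and then to read off the estimates from the contraction property of the semigroup generated by $\mathcal{A}$. By Lemma~\ref{lemma-r} it suffices to show that, under the stated hypotheses, \eqref{abstractCauchyproblem-r} admits a unique strong solution $z^r=(u^r,v^r,w^r)$; the function $u(t)=u^0+\int_0^t u^1(s)\,ds$ is then the unique solution of \eqref{Problem}. For existence and uniqueness of the strong solution I would invoke the same abstract result \cite[Theorem~2.4]{StigFardin2010} already used for $r=1$, whose hypotheses are that $z^{r,0}\in\mathcal{D}(\mathcal{A})$ and that $F^r$ is regular enough in time, namely absolutely continuous with $\dot F^r\in L_1((0,T);Z)$, i.e.\ $F^r\in W_1^1((0,T);Z)$. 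The proof thus splits into verifying these two hypotheses and then extracting the quantitative bounds.

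First I would check that $z^{r,0}=(u^r(0),u^{r+1}(0),0)\in\mathcal{D}(\mathcal{A})$. Since $w^r(0,\cdot)=0\in\mathcal{D}(D)$, this reduces to $u^r(0)\in\mathcal{D}(A)$ and $u^{r+1}(0)\in V$, both of which follow from the compatibility conditions \eqref{r-even}, \eqref{r-odd} through the recursive bounds \eqref{InitialInequality-even}, \eqref{InitialInequality-odd} of Remark~\ref{InitialInequality}, which express $\|A^m u^n(0)\|$ for $n\le r+1$ in terms of $\|A^{\bullet}u^0\|$, $\|A^{\bullet}u^1\|$ and $\|A^{\bullet}f^{\bullet}(0)\|$, all finite by assumption. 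Next, recalling from \eqref{r-thDerivativeReformed} that $F^r=(0,\tilde f^r,0)$ with $\tilde f^r(t)=f^r(t)+A\sum_{n=0}^{r-1}u^n(0)\,\cK^{r-n-1}(t)-Au^r(0)\,\xi(t)$, I would verify $\tilde f^r\in W_1^1((0,T);H)$: the forcing $f^r$ is Lipschitz by hypothesis; the coefficients $Au^n(0)$ lie in $H$ because $u^n(0)\in\mathcal{D}(A)$ for $n\le r$ (again Remark~\ref{InitialInequality}); and the scalar factors are admissible since $\dot\xi=-\cK\in L_1$, while the most demanding term $Au^0\,\cK^{r-1}(t)$ has derivative $Au^0\,\cK^{r}(t)\in L_1$ precisely because $\cK\in W_1^r(\IR^+)$, which is where the assumed kernel regularity is sharp.

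With the hypotheses verified, the contraction property gives $\|z^r(t)\|_Z\le \|z^{r,0}\|_Z+\int_0^t\|F^r(s)\|_Z\,ds$. Since $\|z^r(t)\|_Z^2\ge (1-\kappa)\|u^r(t)\|_V^2+\|u^{r+1}(t)\|^2$, the left side controls $\|u^r(t)\|_V+\|u^{r+1}(t)\|$; since $\|z^{r,0}\|_Z^2=(1-\kappa)\|u^r(0)\|_V^2+\|u^{r+1}(0)\|^2$ with $\|u^r(0)\|_V\le C\|Au^r(0)\|$ (from $\|v\|_V^2=(Av,v)$ and \eqref{InitialInequality-0}); and since $\|F^r(s)\|_Z=\|\tilde f^r(s)\|$ with the kernel integrals $\int_0^t|\cK^{r-n-1}|$ and $\int_0^t\xi$ bounded by $\|\cK\|_{W_1^r(\IR^+)}$ and $\kappa T$, I obtain $\|u^r(t)\|_V+\|u^{r+1}(t)\|\le C\big(\sum_{n=0}^{r}\|Au^n(0)\|+\|u^{r+1}(0)\|+\int_0^t\|f^r\|\,ds\big)$. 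It then remains to substitute the bounds \eqref{InitialInequality-even}, \eqref{InitialInequality-odd} for $\|Au^n(0)\|=\|A^1u^n(0)\|$ and $\|u^{r+1}(0)\|=\|A^0u^{r+1}(0)\|$, distinguishing $r=2k$ and $r=2k+1$; the terms of highest order reproduce exactly the right-hand sides of \eqref{evengeneralregularityestimate} and \eqref{oddgeneralregularityestimate} (for instance $\|Au^{2k}(0)\|$ yields $\|A^{k+1}u^0\|$ and $\|A^ku^1\|$ in the even case, while $\|Au^{2k+1}(0)\|$ yields $\|A^{k+1}u^1\|$ in the odd case), and the contributions with $n<r$ are subsumed in these.

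I expect the main obstacle to be purely combinatorial: carrying the index bookkeeping of Remark~\ref{InitialInequality} through both parities so that the dominant terms land precisely on the stated right-hand sides, together with confirming the sharpness of $\cK\in W_1^r(\IR^+)$ for $F^r\in W_1^1((0,T);Z)$. Everything else is the direct analogue of the base cases \eqref{RegularityEstimate-1} and \eqref{regularityestimate2}.
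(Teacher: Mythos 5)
Your proposal is correct and follows essentially the same route as the paper's proof: reduction via Lemma~\ref{lemma-r} to the abstract Cauchy problem \eqref{abstractCauchyproblem-r}, verification that $z^{r,0}\in\mathcal{D}(\mathcal{A})$ and $F^r\in W_1^1((0,T);Z)$ via Remark~\ref{InitialInequality} and the assumption $\cK\in W_1^r(\IR^+)$, and then the contraction-semigroup bound $\lVert z^r(t)\rVert_Z\le\lVert z^{r,0}\rVert_Z+\int_0^t\lVert F^r(s)\rVert_Z\,ds$ combined with \eqref{InitialInequality-0}--\eqref{InitialInequality-odd} to obtain the stated estimates. The only cosmetic difference is that you invoke \cite[Theorem~2.4]{StigFardin2010} for the abstract existence result where the paper cites \cite[Corollary~4.2.10]{Pazy}, which is immaterial since both deliver the same strong-solution criterion.
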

\begin{proof}
1. 
The case $r=1$ follows from Theorem \ref{theorem-r:1}. 
Then, for a given $r \geq 2$, we show that 
\begin{itemize}
\item[(i)] 
$z^r(0)=z^{r,0}=(u^r(0),u^{r+1}(0),0) \in \mathcal{D}(\mathcal{A})$,
\item[(ii)]
$F^r$ is differentiable almost everywhere on $[0,T]$ and 
$\dot F \in L_1([0,T];Z)$. 
\end{itemize}
These imply existence of a unique strong solution of the abstract 
Cauchy problem \eqref{abstractCauchyproblem-r}, 
by \cite[Corollary 4.2.10]{Pazy}, that yields existence 
of a unique solusion of \eqref{Problem}, by Lemma \ref{lemma-r}.

2. First we note that (i) holds, if   
$u^r(0) \in \mathcal{D}(A)$ and $u^{r+1}(0) \in V$, 
by the definition of $\mathcal{D}(\mathcal{A})$.  
This can be verified by applying the compatibility conditions 
\eqref{r-even}--\eqref{r-odd} in 
\eqref{InitialInequality-even}--\eqref{InitialInequality-odd}, using 
\eqref{InitialInequality-0}. 

3. 
Now we prove (ii). By assumption $f^r:[0,T] \to H$ is Lipschitz continuous. 
Therefore, by a classical result from functional analysis, 
$f^r$ is differentiable almost everywhere on $[0,T]$ and 
$\dot f^r \in L_1([0,T];H)$, since $H$ is a Hilbert space. 
Then, recalling the assumption $\cK \in W_1^r(\IR^+)$  
and the fact that 
\begin{equation*}
  \dot{\tilde{f}}^r(t)
  =\dot f^r(t)+A \sum_{n=0}^r u^n(0) \cK^{r-n}(t),
\end{equation*}
we conclude that $F^r(t)=(0,\tilde f^r(t),0)$ is differentiable almost everywhere 
on $[0,T]$ and $\dot F^r \in L_1([0,T];Z)$, that completes the proof of (ii).  

4. 
Hence, since $\mathcal{A}$ generates a $C_0$-semigroup of 
contractions on $Z$ by Corollary 1, we conclude, 
by \cite[Corollary 4.2.10]{Pazy}, 
that there exists a unique strong solution $z^r=(u^r,v^r,w^r)$ for the 
abstract Cauchy problem \eqref{abstractCauchyproblem-r}. 
This, by Lemma \ref{lemma-r}, proves that there is a unique solution $u$ 
of \eqref{Problem}, that completes the first part of the theorem. 

5. 
Finally, we prove the regularity estimates 
\eqref{evengeneralregularityestimate} and \eqref{oddgeneralregularityestimate} 
for $r \geq 2$, since the case $r=1$ follows from 
Theorem \ref{theorem-r:1}. 

The unique strong solution $z^r=(u^r,v^r,w^r)$ of 
\eqref{abstractCauchyproblem-r}, is given by
\begin{equation*}
  z^r(t)=e^{t\mathcal{A}} z^{r,0}+\int_0^t\! e^{(t-s)\mathcal{A}}F^r(s)\,ds,
\end{equation*}
and we recall the fact that $\lVert e^{t\mathcal{A}}\rVert_Z \le 1$, 
since $\mathcal{A}$ is an infinitesimal generator of a $C_0$ semigroup of contractions on $Z$.
Therefore
\begin{equation*}
  \lVert z^r(t) \rVert_Z 
  \le \lVert z^{r,0} \rVert_Z+\int_0^t \!\lVert F^r(s)\rVert_Z\,ds.
\end{equation*}
Since $v^r=\dot u^r=u^{r+1}$, $z^{r,0}=(u^{r}(0),u^{r+1}(0),0)$, and 
\begin{equation*}
  \begin{split}
    \lVert F^r(s) \rVert_Z
    =\lVert \tilde{f^r}(s)\rVert
    &=\lVert f^r(s)\|+\sum_{n=0}^{r-1}\| A u^n(0)\| |\cK^{r-n-1}(s)| 
    +\| A u^r(0)\| |\xi(s)|, 
  \end{split}
\end{equation*}
therefore we have
\begin{equation*}
  \begin{split}
    \Big((1-\kappa)\lVert u^r(t)\rVert_V^2&+ \lVert u^{r+1}(t)\rVert^2 \Big)^{1/2}\\
     &\le \big((1-\kappa)\lVert u^r(0) \rVert_V^2
       +\lVert u^{r+1}(0) \rVert^2\big)^{1/2}\\
     &\quad  +\int_0^t\! \rVert f^r(s)\rVert \ ds
        +\sum_{n=0}^{r-1}\| A u^n(0)\| \int_0^t\! |\cK^{r-n-1}(s)|\ ds \\
    &\quad  +\| A u^r(0)\| \int_0^t\! |\xi(s)|\ ds.
  \end{split}
\end{equation*}
Hence, considering the assumption that 
$\|\cK\|_{W_1^r(\IR^+)}<1$, we have, for some 
$C=C(\kappa,\|\cK\|_{W_1^r(\IR^+)},T)$,
\begin{equation} \label{Theorem4-eq1}
  \begin{split}
    \lVert u^r(t)\rVert_V+ \lVert u^{r+1}(t)\rVert
     &\le C\bigg(
       \lVert u^r(0) \rVert_V
       +\lVert u^{r+1}(0) \rVert\\
     &\quad  +\int_0^t\! \rVert f^r(s)\rVert \ ds
       +\sum_{n=0}^{r}\| A u^n(0)\| \bigg).
  \end{split}
\end{equation}
Since, by elliptic regularity estimate \eqref{ellipticreg}, 
\begin{equation*}
  \|u^r(0)\|_V \leq \|u^r(0)\|_{H^2} \leq C \|A u^r(0)\|,
\end{equation*}
so we have 
\begin{equation*} 
  \begin{split}
    \lVert u^r(t)\rVert_V+ \lVert u^{r+1}(t)\rVert
     &\le C\bigg(
       \| u^{r+1}(0) \|+\sum_{n=0}^{r}\| A u^n(0)\| +\int_0^t\! \rVert f^r(s)\rVert \ ds
        \bigg),
  \end{split}
\end{equation*}
that, by \eqref{InitialInequality-0}--\eqref{InitialInequality-odd}, 
implies the regularity estimates 
\eqref{evengeneralregularityestimate}--\eqref{oddgeneralregularityestimate}. 
Now, the proof is complete.
\end{proof}

\section{The spatial finite elment discretization}
The variational form of \eqref{Problem}  
is to find $u(t) \in V$, such that 
$u(0)=u^0$, $\dot u(0)=u^1$, and for $t \in (0,T)$,
\begin{equation}  \label{VF}
  (\ddot u,v)+a(u,v) -\int_0^t\! \cK(t-s)a(u(s),v)\ d s=(f,v),  
    \quad \forall v \in V.
\end{equation}

Let $\Omega$ be a convex polygonal domain and 
$\{\cT_h\}$ be a regular family of 
 triangulations of $\Omega$ with corresponding family of 
 finite element spaces $V_h^l \subset V$,
 consisting of continuous piecewise polynomials of degree 
 at most $l-1$, that vanish on 
$\partial\Omega$ (so the mesh is required to fit $\partial \Omega$). 
Here $l\ge 2$ is an integer number. 
We define piecewise constant mesh function 
$h_K(x) = \textrm{diam}(K)$ for $x\in K,\,K\in \mathcal{T}_h$, 
and for our error analysis we denote $h=\max_{K\in\mathcal{T}_h}h_K$. 
We note that the finite element spaces $V_h^l$ have the property that
\begin{equation} \label{GeneralErrorEstimate}
  \min_{\chi \in V_h^l}
  \{\|v-\chi\| + h\|v-\chi\|_1\}\le C h^i\|v\|_i, 
  \quad \textrm{for} \ v \in  H^i \cap V,\ 1\le i\le l.
\end{equation} 

We recall  the $L_2$-projection $\cP_h:H\to V_h^l$ and 
the Ritz projection $\cR_h:V\to V_h^l$ defined by
\begin{align*}
  a(\cR_h v, \chi)=a(v,\chi)\quad {\rm and}\quad 
  (\cP_h v,\chi)=(v,\chi),\qquad \forall \chi \in V_h^l.
\end{align*}
We also recall the elliptic regularity estimate \eqref{ellipticreg}, 
such that the error estimates \eqref{GeneralErrorEstimate} hold true 
for the Ritz projection $\cR_h$, see \cite{Thomee_Book}, i.e.,
\begin{equation} \label{errorRh}
  \begin{split}
    \|(\cR_h-I)v\| 
    + h\|(&\cR_h-I)v\|_1\le C h^i\|v\|_i, 
    \quad \textrm{for} \ v \in H^i \cap V,\ 1\le i\le l.
  \end{split}
\end{equation} 

Then, the spatial finite element discretization of \eqref{VF} 
is to find $u_h(t) \in V_h^l$ such that 
$u_h(\cdot,0)=u_h^0,\ \dot u_h(\cdot,0)=u_h^1$, and for $t\in(0,T)$,
\begin{equation}  \label{FE}
  (\ddot u_h,v_h)+a(u_h,v_h) -\int_0^t\! \cK(t-s)a(u_h(s),v_h)\ d s=(f,v_h),  
    \quad \forall v_h \in V_h^l, 
\end{equation}
where $u_h^0$ and $u_h^1$ are suitable approximations to be chosen, 
respectively, for $u_0$ and $u^1$ in $V_h^l$.

\begin{theorem} \label{Apriori-Semidiscrete}
Assume that $\Omega$ is a convex polygonal domain. 
Let $u$ and $u_h$ be, respectively, the solutions of \eqref{VF} and \eqref{FE}.
Then  
\begin{equation}   \label{A_Priori_1}
  \begin{split}
     \|u_h(T)-u(T)\|
     &\le C\|u_h^0-\cR_h u^0\|
     + Ch^l\Big(\|u(T)\|_l +\int_0^T\!\|\dot u\|_l \,d\tau\Big),
  \end{split}
\end{equation}
where we assume the initial condition $u_h^1=\cP_h u^1$. 
\end{theorem}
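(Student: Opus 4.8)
The plan is to separate the error through the Ritz projection, writing $u_h-u=\theta+\rho$ with $\theta=u_h-\cR_h u\in V_h^l$ and $\rho=\cR_h u-u$. The part $\rho(T)$ is disposed of immediately by the Ritz estimate \eqref{errorRh}, which gives $\|\rho(T)\|\le Ch^l\|u(T)\|_l$, so everything reduces to bounding $\|\theta(T)\|$. Subtracting \eqref{VF}, tested against $v_h\in V_h^l$, from \eqref{FE} and using the defining orthogonality $a(\rho(s),v_h)=0$ of the Ritz projection, I get the error equation $(\ddot\theta,v_h)+a(\theta,v_h)-\int_0^t\cK(t-s)a(\theta(s),v_h)\,ds=-(\ddot\rho,v_h)$ for all $v_h\in V_h^l$. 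Since $u_h$ solves a linear system of ODEs, it is as smooth in time as the data, which justifies the time integrations by parts below.

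First I would fix $t^*\in(0,T]$ and test with the time-integrated function $\psi(t)=\int_t^{t^*}\theta(s)\,ds\in V_h^l$, which satisfies $\dot\psi=-\theta$ and $\psi(t^*)=0$ (Baker's device). Integrating over $[0,t^*]$ and integrating by parts twice in time, the inertial term yields $\tfrac12\|\theta(t^*)\|^2$ plus data at $t=0$, the elliptic term yields $\tfrac12\|\psi(0)\|_V^2$, and, decisively, the right-hand side collapses to $(\dot\rho(0),\psi(0))-\int_0^{t^*}(\dot\rho,\theta)\,dt$, so that only the first time derivative $\dot\rho=(\cR_h-I)\dot u$ survives. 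This is exactly the mechanism that produces the $\int_0^T\|\dot u\|_l$ contribution and the reason no extra time regularity of $u$ is required.

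The hard part will be the memory term $M=\int_0^{t^*}\!\int_0^t\cK(t-s)a(\theta(s),\psi(t))\,ds\,dt$: because $\cK$ may be weakly singular, integrating by parts in $s$ (which would expose $\cK(0)$ and $\dot\cK$) is forbidden. Instead I would integrate by parts in the Baker variable $t$; since $\psi(t^*)=0$ and the double primitive $\tilde C(t)=\int_0^t(\cK\ast\theta)(r)\,dr$ vanishes at $t=0$, there is no boundary term and $M=\int_0^{t^*}a(\tilde C(t),\theta(t))\,dt$. Evaluating $\tilde C$ by Fubini and using $\int_0^\tau\cK=\kappa-\xi(\tau)$ from \eqref{xi} gives $\tilde C(t)=\kappa\int_0^t\theta-\int_0^t\xi(t-s)\theta(s)\,ds$, whence $M=\tfrac{\kappa}{2}\|\psi(0)\|_V^2-P$, where $P=\int_0^{t^*}\!\int_0^t\xi(t-s)a(\theta(s),\theta(t))\,ds\,dt\ge 0$ by the positive type property \eqref{positivetype}, extended to the form $a$ by expanding $\theta$ in the eigenfunctions of $A$. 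Only $\xi$, which stays bounded even for weakly singular $\cK$, enters here, so the argument is uniform over both admissible kernel classes.

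Collecting everything, the two $\|\psi(0)\|_V^2$ contributions combine into $\tfrac{1-\kappa}{2}\|\psi(0)\|_V^2\ge 0$ (this is where $\kappa<1$ is used), and together with $P\ge 0$ both sit on the favorable side and may be discarded, leaving $\tfrac12\|\theta(t^*)\|^2\le\tfrac12\|\theta(0)\|^2+(\dot\theta(0)+\dot\rho(0),\psi(0))-\int_0^{t^*}(\dot\rho,\theta)\,dt$. Under the hypothesis $u_h^1=\cP_h u^1$ one computes $\dot\theta(0)+\dot\rho(0)=u_h^1-u^1=\cP_h u^1-u^1$, which is $L_2$-orthogonal to $\psi(0)\in V_h^l$, so that inner product vanishes — this is precisely where the assumption on $u_h^1$ is needed. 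Estimating $\int_0^{t^*}(\dot\rho,\theta)\,dt\le(\max_{[0,T]}\|\theta\|)\int_0^T\|\dot\rho\|\,dt$ and taking the supremum over $t^*$ produces a quadratic inequality in $\max_{[0,T]}\|\theta\|$ (no Gronwall is needed, as the memory contribution was absorbed into $P$); solving it gives $\|\theta(T)\|\le C\|\theta(0)\|+Ch^l\int_0^T\|\dot u\|_l\,dt$. Since $\|\theta(0)\|=\|u_h^0-\cR_h u^0\|$, adding the Ritz bound for $\rho(T)$ yields exactly \eqref{A_Priori_1}.
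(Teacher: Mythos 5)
Your proposal is correct and follows essentially the same route as the paper's proof: the Ritz-projection splitting $e=\theta+\omega$, Baker's test function $\int_t^{t^*}\theta\,d\tau$, the reduction of the right-hand side to $\dot\omega$ so only $\int_0^T\|\dot u\|_l$ is needed, the use of $u_h^1=\cP_h u^1$ to annihilate the initial velocity term, and the conversion of the memory term via $\xi$ into $\kappa\|\psi(0)\|_V^2$ plus a positive-type contribution absorbed on the favorable side, followed by the standard kickback. Your handling of the convolution through the double primitive $\tilde C$ is just a reorganization of the paper's substitution $\cK(t-s)=-D_t\xi(t-s)$ and integration by parts, and your remark on extending \eqref{positivetype} to the form $a(\cdot,\cdot)$ via an eigenfunction expansion is a welcome justification of a step the paper leaves implicit.
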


\begin{proof}
The proof is adapted from \cite{Baker1976}. 
We split  the error as 
\begin{equation}\label{Error_Split}
  e=u_h-u=(u_h-R_hu)+(R_hu-u)=\theta+\omega.
\end{equation}
We need to estimate $\theta$, since the spatial projection error $\omega$ 
is estimated from \eqref{errorRh}. 

So, putting $\theta$  in  \eqref{FE} we have, for $v_h \in V_h^l$, 
\begin{equation*} 
  \begin{split}
    (\ddot{\theta},v_h&)+a(\theta,v_h)-\int_0^t \cK(t-s) a(\theta(s),v_h)ds\\
    &= (\ddot u_h,v_h)+a(u_h,v_h)-\int_0^t \cK(t-s) a(u_h(s),v_h)ds\\
    &\quad - (R_h\ddot{u},v_h)-a(R_h u,v_h)+\int_0^t \cK(t-s) a(R_hu(s),v_h)ds,
  \end{split}
\end{equation*}
that, using \eqref{FE}, the definition of the Ritz projection $\cR_h$, and 
\eqref{VF}, we have 
\begin{equation}\label{theta_eq1}
  \begin{split}
    (\ddot{\theta},v_h&)+a(\theta,v_h)-\int_0^t \cK(t-s) a(\theta(s),v_h)ds\\
    &= (f,v_h)-(R_h\ddot{u},v_h)-a(u,v_h)+\int_0^t \cK(t-s) a(u(s),v_h)ds\\
    &= (\ddot{u},v_h)-(R_h\ddot{u},v_h)
    =-(\ddot{\omega},v_h).
  \end{split}
\end{equation}
Therefore we can write, for $v_h(t) \in V_h^l$, $t \in (0,T]$,
\begin{equation*}
  \frac{d}{dt}(\dot{\theta},v_h)-(\dot{\theta},\dot v_h)
    +a(\theta,v_h)- \int_0^t \cK(t-s) a(\theta(s),v_h(t))ds
   =-\frac{d}{dt}(\dot{\omega},v_h)+(\dot{\omega},\dot{v_h}),
\end{equation*}
that, recalling $e=\theta+\omega$, we obtain
\begin{equation}\label{Psi_eq2}
  -(\dot{\theta},\dot v_h)+a(\theta, v_h)- \int_0^t  \cK(t-s) a(\theta(s),v_h(t))ds
  =-\frac{d}{dt}(\dot{e},v_h)+(\dot{\omega},\dot v_h)
\end{equation}
Now let $ 0<\varepsilon \leq T$ , and we make the particular choice
\begin{equation*}
  v_h(\cdot,t)
  =\int_t^{\varepsilon}\theta(\cdot,\tau)d\tau ,\qquad  0\leq t \leq T,
\end{equation*}
then clearly we have
\begin{equation} \label{zz}
  v_h(\cdot,\varepsilon)=0,\quad \frac{d}{dt}v_h(\cdot,t)=-\theta(\cdot,t),
   \quad 0\leq t \leq T.
\end{equation}
Hence, considering \eqref{zz} in \eqref{Psi_eq2} , we have
\begin{equation*}
\frac{1}{2} \frac{d}{dt} (\|\theta \|^2-\|v_h \|_V^2)-\int_0^t 
\cK(t-s) a(\theta(s),v_h(t))ds
=-\frac{d}{dt}(\dot{e},v_h)-(\dot{\omega},\theta).
\end{equation*} 
Now, integrating from $t=0$ to $t=\varepsilon$, we have
\begin{equation*}
\begin{split}
\|\theta(\varepsilon) \|^2-\|\theta(0) &\|^2-\|v_h(\varepsilon)\|^2_V
+ \|v_h(0)\|^2_V-2\int_0^{\varepsilon}\int_0^t \cK(t-s) a(\theta(s),v_h(t))dsdt\\
&=-2(\dot{e}(\varepsilon),v_h(\varepsilon))+ 2(\dot{e}(0),v_h(0))
- 2\int_0^{\varepsilon}(\dot \omega,\theta)dt.
\end{split}
\end{equation*}
Then, using the initial assuption $u_h^1=\cP_h u^1$  that implies  
the second term on the right side is zero and recalling 
$v_h(\varepsilon)=0$, we conclude
\begin{equation}  \label{eq3}
  \begin{split}
  \|\theta(\varepsilon) \|^2 +\|&v_h(0)\|_V^2 - 2\int_0^{\varepsilon}
     \int_0^t \cK(t-s) a(\theta(s),v_h(t))dsdt \\
  &\leq \|\theta(0) \|^2+2\max_{0 \leq t \leq \varepsilon} \|\theta (t)\| 
     \int _0^{\varepsilon} \|\dot \omega\|dt.
  \end{split}
\end{equation}
Now, by changing the order of integrals, 
using $ \frac{d}{dt}\xi(t-s)=-\cK(t-s) $ from \eqref{xiproperty}, 
and integration by parts, we can write the third term on the left 
side as
\begin{equation*}
\begin{split}
  -2\int_0^{\varepsilon}\int_0^t &\cK(t-s) a(\theta(s),v_h(t))dsdt
  =2\int_0^{\varepsilon}\int_s^{\varepsilon}\frac{d}{dt}\xi(t-s) a(\theta(s),v_h(t))dtds\\
  &=2\int_0^{\varepsilon} \xi(\varepsilon -s) 
   a(\theta(s),v_h(\varepsilon))ds-2\int_0^{\varepsilon} \xi(0) a(\theta(s),v_h(s))ds\\
  &\qquad-2\int_0^{\varepsilon}\int_s^{\varepsilon}\xi(t-s)a(\theta(s),\dot v_h(t))dtds.
\end{split}
\end{equation*}
Then, using \eqref{zz} and $\xi(0)=\kappa$, we have
\begin{equation*}
  \begin{split}
    -2\int_0^{\varepsilon}&\int_0^t \cK(t-s) a(\theta(s),v_h(t))dsdt\\
    &=\kappa(\|v_h(\varepsilon)\|^2_V-\|v_h(0)\|^2_V)
   +2\int_0^{\varepsilon}\int_s^{\varepsilon} \xi(t-s) a(\theta(s),\theta(t))dtds.
  \end{split}
\end{equation*}
Therefore, using this and $v_h(\varepsilon)=0$ in \eqref{eq3} we have
\begin{equation*}  
  \begin{split}
  \|\theta(\varepsilon) \|^2 +(1-\kappa)\|&v_h(0)\|_V^2 
  +2\int_0^{\varepsilon}\int_s^{\varepsilon} \xi(t-s) a(\theta(s),\theta(t))dtds\\
  &\leq \|\theta(0) \|^2+2\max_{0 \leq t \leq \varepsilon} \|\theta (t)\| 
     \int _0^{\varepsilon} \|\dot \omega\|dt,
  \end{split}
\end{equation*}
that considering the fact that $\xi$ is a positive type kernel and $\kappa<1$, 
in a standard way, implies that
\begin{equation*}
   \|\theta(T)\|\leq C(\|\theta(0)\|+\int_0^T\|\dot{\omega}\|d\tau). 
\end{equation*}
Hence, recaling \eqref{Error_Split}, we have
\begin{equation*} 
  \begin{split}
   \|e(T)\|
   &\leq \|\theta(T)\|+\|\omega(T)\|\\
   &\leq  C\Big(\|u_h^0-R_hu^0\|+\int _0^T\|\dot u-R_h \dot u\|dt\Big)
     +\|(R_hu -u)(T)\|,
  \end{split}
\end{equation*} 
that using the error estimate \eqref{errorRh} implies the a priori error 
estimate \eqref{A_Priori_1}. 
\end{proof}

\section{The continuous Galerkin method}
Here we formulate a continuous space-time Galerkin finite 
element method of order one, 
cG(1)cG(1), for the primar and dual problems \eqref{weakformprimary} 
and \eqref{weakformdual}, that is based on a similar method for the 
wave equation in \cite{BangerthGeigerRannacher2010}.  
Then we prove stability estimaes for the 
discrete dual problem. These are then used in a priori error analysis, 
that is via duality. 

\subsection{Weak formulation}
First we write a ``velocity-displacement'' formulation of \eqref{Problem}
which is obtained by introducing a new velocity variable. 
We use the new variables $u_1=u$, $u_2=\dot u$, and 
$u=(u_1,u_2)$, 
then the variational form is to find 
$u_1(t),\,u_2(t)\in V$ such that
$u_1(0)=u^0$, $u_2(0)=v^0$, and for $t \in (0,T)$,
\begin{equation} \label{weakform}
  \begin{split}
    &(\dot u_1(t),v_1)-(u_2(t),v_1)=0,\\
    &(\dot u_2(t),v_2) + a(u_1(t),v_2)
    - \int_0^t \cK(t-s) a(u_1(s), v_2) \, ds \\
    &\qquad\qquad\qquad\quad\quad
           = (f(t),v_2),
    \quad \forall v_1,v_2 \in V .
  \end{split}
\end{equation}

Now we define the bilinear and linear forms
$B:\cU\times \cV \to \IR$ and 
$L:\cV \to \IR$  
by
\begin{align}  \label{BL}
  \begin{aligned}
    B(u,v)
    &=\int_0^T\!
      \Big\{(\dot u_1,v_1)-(u_2,v_1)
       +(\dot u_2,v_2)+a(u_1,v_2)\\
    &\quad\quad-\int_0^t\!\cK(t-s)
      a\big(u_1(s),v_2\big)\,ds\Big\}\,dt\\
    &\quad+\big(u_1(0),v_1(0)\big)+\big(u_2(0),v_2(0)\big),\\
    L(v)&=\int_0^T\!(f,v_2)\,dt
      +\big(u^0,v_1(0)\big)+\big(v^0,v_2(0)\big),
  \end{aligned}
\end{align}
where
\begin{align}   \label{UV}
  \begin{aligned}
    \cU&=H^1(0,T;V)\times H^1(0,T;H),\\
    \cV&=\big\{v=(v_1,v_2):
      w\in L_2(0,T;H)\times L_2(0,T;V),
      v_i \text{ right continuous in}\ t \big\}.
  \end{aligned}
\end{align}

We note that the weak form \eqref{weakform} can be writen as: 
find $u\in \cU$ such that,
\begin{equation} \label{weakformprimary}
  B(u,v)=L(v),\quad \forall v\in\cV.
\end{equation}
Here the definition of the velocity $u_2=\dot u_1$ is enforced in the
$L_2$ sense, and the initial data are placed in the bilinear form in
a weak sense. A variant is used in \cite{StigFardin2010} where the
velocity has been enforced in the $H^1$ sense, without placing the initial data in the bilinear form. 
We also note that the initial data  are retained by the choice of 
the function space $\cV$, that
consists of right continuous functions with respect to time.

Our a priori error analysis for the full discrete problem, 
cG(1)cG(1) method in $\S6$, is based on the duality arguments, 
and therefore we formulate 
the dual form of \eqref{weakformprimary}. 
To this end, we define the bilinear and linear forms
$B_\tau^*:\cV^*\times\cU^*\to\IR,\,
L_\tau^*:\cV^*\to\IR$,
 for $\tau\in \IR^{\ge 0}$, by
\begin{align}  \label{BLdual}
  \begin{aligned}
    B_\tau^*(v,z)
    &=\int_{\tau}^T\!
      \Big\{-(v_1,\dot z_1)+a(v_1,z_2)
      -\int_t^T\!\cK(s-t)a\big(v_1,z_2(s)\big)\,ds \\
    &\quad \quad - (v_2,\dot{z}_2)-(v_2,z_1) \Big\}\,dt
      +\big(v_1(T),z_1(T)\big)
        +\big(v_2(T),z_2(T)\big),\\
    L^*_\tau(v)
    &=\int_{\tau}^{T}\!
      \Big\{(v_1,j_1)+(v_2,j_2)\Big\}\,dt
        +\big(v_1(T),z_1^{T}\big)
        +(v_2(T),z_2^{T}),
  \end{aligned}
\end{align}
where $j_1,\,j_2$ and $z_1^{T},\,z_2^{T}$ represent, respectively,
the load terms and the initial data of the dual (adjoint) problem.
In case of $\tau=0$, we use the notation
$B^*,L^*$ for short.
Here
\begin{align}   \label{UVdual}
  \begin{aligned}
    \cU^*&=H^1(0,T;H)\times H^1(0,T;V),\\
    \cV^*&=\big\{v=(v_1,v_2) \in L_2(0,T;V)\times L_2(0,T;H):
      v_i \text{ left continuous in } t \big\}.
  \end{aligned}
\end{align}
We note that, recalling \eqref{UV},  
$\cU\subset \cV^*,\,\cU^*\subset\cV$.

We also note that $B^*$ is the adjoint form of $B$. 
Indeed, integrating by parts with respect to time 
in $B$, then changing the order of integrals in the convolution 
term as well as changing the role of the variables $s,t$, we have,
\begin{equation}   \label{BequalBstar}
  B(u,v)=B^*(u,v),
  \quad \forall u\in\cU,\, v\in\cU^*.
\end{equation}

Hence, the variational form of the dual problem
is to find $z\in \cU^*$ such that,
\begin{equation}   \label{weakformdual}
  B^*(v,z)=L^*(v),\quad \forall v\in\cV^*.
\end{equation}

\subsection{The cG(1)cG(1) method}
Let $0=t_0<t_1<\cdots<t_{n-1}<t_n<\cdots<t_N=T$ be a partition of the time interval $[0,T]$.
To each discrete time level $t_n$ we associate a triangulation $\cT_h^n$ of the polygonal domain $\Omega$ with the mesh function,
\begin{equation}   \label{hn}
  h_n(x)=h_K=\diam(K),\quad x\in K,\,K\in\cT_h^n,
\end{equation}
and a finite element space $V_h^n$ consisting of continuous
piecewise linear polynomials.
For each time subinterval $I_n=(t_{n-1},t_n)$ of length $k_n=t_n-t_{n-1}$,
we define intermediate triangulaion $\bar \cT_h^n$ which is composed
of the union of the neighboring meshes
$\cT_h^n,\,\cT_h^{n-1}$ defined at discrete time levels $t_n,\,t_{n-1}$, respectively.
The mesh function $\bar h_n$ is then defined by
\begin{equation}   \label{barhn}
  \bar h_n(x)=\bar h_K=\diam(K),\quad x\in K,\,K\in\bar\cT_h^n.
\end{equation}
Correspondingly, we define the finite element spaces
$\bar V_h^n$ consisting of continuous piecewise linear polynomials.
This construction is used in order to allow continuity in time of the
trial functions when the meshes change with time.
Hence we obtain a decomposition of each time slab
$\Omega^n=\Omega\times I_n$ into space-time cells
$K\times I_n,\,K\in \bar\cT_h^n$
(prisms, for example, in case of $\Omega\subset \IR^2$).
The trial and test function spaces for the discrete form are,
respectively:
\begin{equation}   \label{discretespaces}
  \begin{split}
    \cU_{hk}&=\Big\{U=(U_1,U_2):
      U \text{ continuous in } \Omega\times [0,T], 
        U(x,t)|_{I_n} \text{ linear in } t,\\
      &\qquad\qquad\qquad\qquad\
        U(\cdot,t_n)\in (V_h^n)^2, U(\cdot,t)|_{I_n}
          \in (\bar V_h^n)^2 \Big\},\\
    \cV_{hk}&=\Big\{V=(V_1,V_2):
      V(\cdot,t) \text{ continuous in } \Omega, 
         V(\cdot,t)|_{I_n}\in (V_h^n)^2,\\
       &\qquad\qquad\qquad\qquad\
         V(x,t)|_{I_n} \text{ piecewise constant in } t\Big\}.
  \end{split}
\end{equation}
We note that global continuity of the trail functions in $\cU_{hk}$
requires the use of `\textit{hanging nodes}' if the spatial mesh changes
across a time level $t_n$.
We allow one hanging node per edge or face.

\begin{rem}  \label{MeshRefind}
If we do not change the spatial mesh or just refine the
spatial mesh from one time level to the next one, i.e.,
\begin{equation}   \label{assumption}
  V_h^{n-1}\subset V_h^n,\quad n=1,\dots,N,
\end{equation}
then we have $\bar V_h^n=V_h^n$.
\end{rem}

In the construction of $\cU_{hk}$ and $\cV_{hk}$ we have associated
the triangulation $\cT_h^n$ with discrete time levels instead of the time
slabs $\Omega^n$, and in the interior of time slabs we let $U$ be from the
union of the finite element spaces defined on the triangulations at the
two adjacent time levels.
This construction is necessary to allow for trial functions that are continuous also at the discrete time leveles even if grids change between
time steps. 
For more details and computational aspects, including hanging nodes, 
see \cite{FardinarXiv:1205.0159} and the references therein. 
Associating triangulation with time slabs instead of time levels would
yield a variant scheme which includes jump terms due to discontinuity
at discrete time leveles, when coarsening happens.
This means that there are extra degrees of freedom that one might use
suitable projections for transfering solution at the time levels $t_n$,
 see \cite{StigFardin2010}.

The continuous Galerkin method, based on the variational formulation \eqref{weakform}, is to find $U\in\cU_{hk}$ such that,
\begin{equation}   \label{cG}
  B(U,V)=L(V),\quad \forall\, V\in \cV_{hk}.
\end{equation}
Here, as a natural choice, we consider the initial conditions 
\begin{equation}   \label{initialdata}
  u_h^0:=U_1(0)=\cP_h u^0,\quad u_h^1:=U_2(0)=\cP_h u^1,
\end{equation}
where the $L_2$ pojection $\cP_h$ is defined in \eqref{projections}. 

The Galerkin orthogonality, with $u=(u_1,u_2)$ being the exact solution
of \eqref{weakform}, is then,
\begin{equation}   \label{Galerkinorthogonality}
  B(U-u,V)=0,\quad \forall\, V\in \cV_{hk}.
\end{equation}
Similarly the continuous Galerkin method, based on the dual variational formulation \eqref{weakformdual}, is to find $Z\in\cU_{hk}$ such that,
\begin{equation}   \label{dualcG}
  B^*(V,Z)=L^*(V),\quad \forall\, V\in \cV_{hk}.
\end{equation}
Then, $Z$ also satisfies, for $n=0,1,\dots,N-1$,
\begin{equation}   \label{dualcG2}
  B_{t_n}^*(V,Z)=L^*_{t_n}(V),\quad \forall\, V\in \cV_{hk}.
\end{equation}

From \eqref{cG} we can recover the time stepping scheme,
\begin{equation}   \label{cG2}
  \begin{split}
    &\int_{I_n} \!\Big\{(\dot{U}_1,V_1)-(U_2,V_1) \Big\}\,dt=0,\\
    &\int_{I_n}\!\Big\{(\dot{U}_2,V_2)+a(U_1,V_2)
       -\int_0^t \! \cK(t-s)a\big(U_1(s),V_2\big)\,ds\Big\}\,dt\\
    &\qquad\qquad
      =\int_{I_n}\! (f,V_2)\,dt,
      \quad\forall\, (V_1,V_2)\in \cV_{hk},\\
    &U_1(0)=u_h^0,\quad U_2(0)=u_h^1,
  \end{split}
\end{equation}
with the initial conditions \eqref{initialdata}. 

Typical functions
$U=(U_1,U_2)\in\cV_{hk},\,W=(W_1,W_2)\in\cW_{hk}$ are as follows:
\begin{equation}   \label{typicalfunctions}
  \begin{split}
    U_i(x,t_n)&=U_i^n(x)=\sum_{j=1}^{m_n} U_{i,j}^n \vp_j^n(x),\\
    U_i(x,t)|_{I_n}&=\psi_{n-1}(t)U_i^{n-1}(x)+\psi_n(t)U_i^n(x),\\
    W_i(x,t)|_{I_n}&=\sum_{j=1}^{m_n} W_{i,j}^n \vp_j^n(x),
  \end{split}
\end{equation}
where $m_n$ is the number of degrees of freedom in $\cT_h^n$,
$\{\vp_j^n(x)\}_{j=1}^{m_n}$ are the nodal basis functions for
$V_h^n$ defined on triangulation $\cT_h^n$, and $\psi_n(t)$ is
the nodal basis function defined at time level $t_n$.
Hence \eqref{cG2} yields
\begin{equation*}
  \begin{split}
    &M^n U_1^n - \frac{k_n}{2}M^n U_2^n
      =M^{n-1,n} U_1^{n-1}
      +\frac{k_n}{2}M^{n-1,n} U_2^{n-1},\\
    & M^n U_2^n
      +(\frac{k_n}{2}-\omega_{n,n}^-)S^n U_1^n
      = M^{n-1,n} U_2^{n-1}
      +(-\frac{k_n}{2}+\omega_{n,n}^+)S^{n-1,n} U_1^{n-1}\\
     &\qquad\qquad\qquad\qquad\qquad\qquad\qquad
      +\sum_{l=1}^{n-1}\Big(\omega_{n,l}^-S^{l,n} U_1^l
                                   +\omega_{n,l}^+S^{l-1,n} U_1^{l-1}\Big)+B^n,\\
    & U_1^0=u_h^0,\quad U_2^0=u_h^1,
  \end{split}
\end{equation*}
where, for $l=1,\dots,n$,
\begin{align*}
  \begin{aligned}
    \omega_{n,l}^+&=\int_{I_n}\!
      \int_{t_{l-1}}^{t\wedge t_l}\!
      \cK(t-s)\psi_{l-1}(s)\,ds\,dt,
      \qquad 
      \omega_{n,l}^-=\int_{I_n}\!
      \int_{t_{l-1}}^{t\wedge t_l}\!
      \cK(t-s)\psi_l(s)\,ds\,dt,\\
    B^n&=(B_i^n)_i=\Big(\int_{I_n}\!(f,\vp_i)\, dt\Big)_{\!i}\,,\\
    M^n&=(M_{ij}^n)_{ij}=\big((\vp_j^n,\vp_i^n)\big)_{ij},\quad
      M^{n-1,n}=(M_{ij}^{n-1,n})_{ij}
      =\big((\vp_j^{n-1},\vp_i^n)\big)_{ij},\\
    S^{l,n}&=(S_{ij}^{l,n})_{ij}=\big(a(\vp_j^l,\vp_i^n)\big)_{ij}.
  \end{aligned}
\end{align*}

We define the orthogonal projections $\cR_{h,n}:V\to V_h^n$,
$\cP_{h,n}:H\to V_h^n$ and $\cP_{k,n}:L_2(I_n)^d\to\IP^d_0(I_n)$,
respectively, by
\begin{align}  \label{projections}
  \begin{aligned}
    a(\cR_{h,n}v-v,\chi)&=0,&&\forall v\in V,\,
      \chi\in V_h^n,\\
    (\cP_{h,n}v-v,\chi)&=0,&&\forall v\in H,\,
      \chi\in V_h^n,\\
    \int_{I_n}\!\!(\cP_{k,n}v-v)\cdot \psi\, dt&=0,&&
       \forall  v\in L_2(I_n)^d, \,\psi\in \IP^d_0(I_n)\,,
  \end{aligned}
\end{align}
with $\IP_0^d$ denoting the set of all vector-valued constant
polynomials. Correspondingly, we define $\cR_hv,\,\cP_hv$
and $\cP_kv$ for
$t\!\in\! I_n\,(n=1,\cdots,N)$, by
$(\cR_hv)(t)=\cR_{h,n}v(t),\,
(\cP_hv)(t)=\cP_{h,n}v(t)$, and
$\cP_kv=\cP_{k,n}(v\!\!\mid_{I_n})$.
\begin{rem}   \label{PkV}
In the case of assumption \eqref{assumption},
by Remark \ref{MeshRefind} and the definition of the
$L_2$-projection $\cP_k$, we have
$\dot V,\,\cP_k V \in \cV_{hk}$, for any $V\in\cU_{hk}$.
\end{rem}

We introduce the discrete linear operator
$A_{n,r}:V_h^r\to V_h^n$ by
\begin{equation*}
  a(v_r,w_n)
   =(A_{n,r}v_r,w_n),\quad\forall v_r\in V_h^r,
    \,w_n \in V_h^n.
\end{equation*}
We set $A_n=A_{n,n}$, with discrete norms
\begin{equation*}
  \| v_n\|_{h,l}=\| A_{n}^{l/2} v_n\|
     =\sqrt{(v_n,A_n^l v_n)},\,\quad v_n\in V_h^n\,\,
       \textrm{and}\,\, l\in \IR\,,
\end{equation*}
and $A_h$ so that $A_h v=A_n v$ for $v \in V_h^n$.
We use $\bar A_h$ when it acts on $\bar V_h^n$.
For later use in our error analysis we note that
$\cP_h A=A_h \cR_h$.

\subsection{Stability of the solution of the discrete dual problem}
We know that stability estimates and the corresponding analysis for 
dual problem is similar to the primar problem, however with opposite 
time direction. 
Hence, having a smooth or weakly singular kernel with 
\eqref{KernelProperty}, we can quote slightly different energy identities, 
compare to \eqref{discretestabilitydual}, from \cite{StigFardin2010} or \cite{AdolfssonEnelundLarssonRacheva2006} 
for the discrete dual solution, from which similar stability estimates to 
\eqref{discretestabilitydualineq} is obtained, 
though with different projections and constants.

To prove stability estimates in \cite{StigFardin2010} and 
\cite{FardinBIMS2012} we have used auxiliary functions 
in the form, respectively, 
$W(t,s)=U(t)-U(s)$ and $W(t,s)=U(t)-U(t-s)$. 
Here, using the properties of the fuction $\xi=\xi(t)$ in the convolution 
integral and partial integration, we give a proof  
which is straightforward. 

We note that the stability constant in \eqref{discretestabilitydualineq} 
does not depend on $t$. See \cite{RiviereShawWhiteman2007},  
\cite{ShawWhiteman2004} and \cite{RiveraMenzala1999}, 
where stability estimates have been represented, 
in which the stability factor depends on $t$, due to Gronwall's lemma.  
\begin{theorem}
Let $Z$ be the solution of \eqref{dualcG} with
sufficiently smooth data $z_1^T,z_2^T$, $j_1,j_2$. 
Further, we assume \eqref{assumption}.
Then for $l \in \IR$, we have the identity,
\begin{equation}   \label{discretestabilitydual}
  \begin{split}
    \|Z_1(t_n)&\|_{h,l}^2
     +\tilde{\kappa} \|Z_2(t_n)\|_{h,l+1}^2
     +2\int_{t_n}^T\!\int_t^T\!
      \xi(s-t)a\big(A_h^l\dot{Z}_2(t),\dot Z_2(s)\big)\,ds\,dt \\
    &=\|Z_1(T)\|_{h,l}^2+(1+\kappa) \|Z_2(T)\|_{h,l+1}^2\\
    &\quad +2\int_{t_n}^T\!(A_h^l Z_1,\cP_k\cP_hj_1)\,dt
     +2\int_{t_n}^T\!
      (A_h^{l+1}Z_2,\cP_k\cP_hj_2)\,dt\\
    &\quad-2\int_{t_n}^T\!\int_t^T\!
     \cK(s-t)a\big(A_h^l\cP_k\cP_hj_2,Z_2(s)\big)\,ds\,dt\\
    &\quad-2\int_{t_n}^T\!
     \cK(T-t)a\big(A_h^l Z_2(t),Z_2(T)\big)\,dt\\
    &\quad-2\xi(T-t_n)a\big(A_h^lZ_2(t_n),Z_2(T)\big),
  \end{split}
\end{equation}
where $\tilde{\kappa}=1-\kappa$. 
Moreover, for some constant $C=C(\kappa)$, we have stability estimate
\begin{equation}   \label{discretestabilitydualineq}
  \begin{split}
    \|Z_1(t_n)\|_{h,l}+\|Z_2(t_n)\|_{h,l+1}
    &\le C\Big\{\| Z_1(T)\|_{h,l}+\| Z_2(T)\|_{h,l+1}\\
    &\qquad\  +\int_{t_n}^T\!
      \Big(\|\cP_h j_1\|_{h,l}
      +\|\cP_h j_2\|_{h,l+1} \Big)\,dt\Big\}.
  \end{split}
\end{equation}
Here, we set the initial data of the dual problem as 
\begin{equation} \label{initialdatadual}
    Z_i(T)=\cP_h z_i^T, \quad i=0,1.
\end{equation}
\end{theorem}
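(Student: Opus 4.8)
The plan is to derive the energy identity \eqref{discretestabilitydual} by testing the discrete dual equation against a carefully chosen test function, exactly mirroring the energy argument used in the semidiscrete Theorem \ref{Apriori-Semidiscrete} but run backward in time. Since $Z\in\cU_{hk}$ and, by Remark \ref{PkV} together with assumption \eqref{assumption}, both $\dot Z$ and $\cP_k\dot Z$ lie in $\cV_{hk}$, the natural choice is to test the relations \eqref{dualcG2} with $V=A_h^l\cP_k\dot Z$ (or the appropriate componentwise analogue). First I would write out the two scalar equations recovered from \eqref{dualcG2}, insert this test function, and use the discrete operator identities $\cP_hA=A_h\cR_h$ and the self-adjointness of $A_h$ to move the weights $A_h^l$, $A_h^{l+1}$ onto the solution components. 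The terms $-(v_1,\dot z_1)$, $-(v_2,\dot z_2)$ and the symmetric coupling $a(v_1,z_2)-(v_2,z_1)$ should collapse, upon summing the two equations, into the total time derivatives $\tfrac12\frac{d}{dt}\|Z_1\|_{h,l}^2$ and $\tfrac12\frac{d}{dt}\|Z_2\|_{h,l+1}^2$, whose integration over $(t_n,T)$ produces the boundary terms at $T$ and at $t_n$ on the two sides of \eqref{discretestabilitydual}.

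The crux is the convolution term $\int_t^T\cK(s-t)a(v_1,z_2(s))\,ds$. Here I would follow the device announced in the text: rather than introducing an auxiliary variable $W(t,s)$ as in \cite{StigFardin2010}, I would use $\frac{d}{ds}\xi(s-t)=-\cK(s-t)$ from \eqref{xiproperty} and integrate by parts in $s$. After exchanging the order of the $s$- and $t$-integrations (the region is the triangle $t_n\le t\le s\le T$), the $\cK$-convolution against $\dot Z_2$ rewrites as a $\xi$-weighted double integral $2\int_{t_n}^T\!\int_t^T\xi(s-t)a(A_h^l\dot Z_2(t),\dot Z_2(s))\,ds\,dt$ plus boundary contributions evaluated at $s=T$ and at $s=t$; the latter furnishes the factor $\xi(0)=\kappa$ that combines with the leading $\|Z_2\|_{h,l+1}^2$ coefficient to give $\tilde\kappa=1-\kappa$ on the left and $1+\kappa$ on the right. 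The remaining boundary pieces are precisely the three correction terms in \eqref{discretestabilitydual} carrying $\cK(s-t)$, $\cK(T-t)$ and $\xi(T-t_n)$. Keeping careful track of which boundary evaluations come from the integration by parts versus the reversal of integration order is where the bookkeeping is heaviest, and this is the step I expect to be the main obstacle.

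Once the identity is established, the stability estimate \eqref{discretestabilitydualineq} follows in a standard way. The decisive structural fact is that $\xi$ is a positive type kernel by \eqref{positivetype}, so the double-integral term $2\int_{t_n}^T\!\int_t^T\xi(s-t)a(A_h^l\dot Z_2(t),\dot Z_2(s))\,ds\,dt$ on the left-hand side is nonnegative and may be discarded. With $\tilde\kappa=1-\kappa>0$, the left side then bounds $\|Z_1(t_n)\|_{h,l}^2+(1-\kappa)\|Z_2(t_n)\|_{h,l+1}^2$ from below by a positive multiple of the squared target norms. For the right-hand side I would bound the data boundary terms at $T$ directly, estimate the load terms by Cauchy--Schwarz using $\|\cP_k\cP_hj_i\|\le\|\cP_hj_i\|$ and the stability of the $L_2$-projections, and control the three $\cK$- and $\xi$-weighted correction terms by $\|\cK\|_{L_1}=\kappa$ and $\xi\le\kappa$ together with Cauchy--Schwarz; those terms involving $Z_2(t_n)$ or the running norm $\|Z_2\|_{h,l+1}$ can be absorbed into the left side or handled by a Gronwall-free argument, since, as the text emphasizes, the $L_1$-control on $\cK$ replaces Gronwall's lemma and yields a stability constant $C=C(\kappa)$ independent of $t_n$. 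Taking square roots gives \eqref{discretestabilitydualineq}.
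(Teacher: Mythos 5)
Your overall strategy is the paper's: the paper likewise dispenses with the auxiliary function $W(t,s)$ of \cite{StigFardin2010}, uses $\cK(s-t)=-D_s\xi(s-t)$ from \eqref{xiproperty} with partial integration, and then concludes the stability estimate from the identity via the positive-type property \eqref{positivetype}, Cauchy--Schwarz with $\|\cK\|_{L_1(\IR^+)}=\kappa$ and $\xi\le\kappa$, the piecewise-linear maximum property \eqref{maxU}, $\int_0^T|\cP_kf|\,dt\le\int_0^T|f|\,dt$, and a Gronwall-free kickback over the time levels, giving $C=C(\kappa)$; your third paragraph is sound. But the derivation of the identity as you set it up has a genuine defect: the test function $V=A_h^l\cP_k\dot Z$ does not do what you claim. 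With $v_1=A_h^l\cP_k\dot Z_1$ the term $-(v_1,\dot Z_1)$ becomes $-\|\dot Z_1\|_{h,l}^2$, a dissipation-type quantity that does not telescope into endpoint norms, so your assertion that it collapses to $-\tfrac12\tfrac{d}{dt}\|Z_1\|_{h,l}^2$ fails. The choice that makes your description true is $V=\big(A_h^l\cP_kZ_1,\,A_h^{l+1}\cP_kZ_2\big)$: undifferentiated components, and an extra power of $A_h$ in the second slot, without which the coupling $a(v_1,z_2)-(v_2,z_1)$ does not cancel and the $Z_2$-terms come out in the wrong norm. (The paper in fact tests only the $V_1$-equation with $V_1=A_h^l\cP_kZ_1$ and consumes the $V_2$-equation entirely through a pointwise relation; your summed-equations variant also works, but only with the corrected weights.)

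The second missing idea is that pointwise relation itself, \eqref{Z1Z2}: $\cP_kZ_1=-\dot Z_2-\cP_k\cP_hj_2$, extracted from the $V_2$-component of \eqref{dualcG2} (valid pointwise because that equation holds for \emph{all} piecewise-constant test functions) and then substituted repeatedly. Without it, your $\xi$-integration by parts leaves the double integral in the non-symmetric form $\int_{t_n}^T\!\int_t^T\xi(s-t)\,a\big(A_h^l\cP_kZ_1(t),\dot Z_2(s)\big)\,ds\,dt$, to which the positivity \eqref{positivetype} does not apply, so the crucial sign argument in the stability step is unavailable. Only after replacing $\cP_kZ_1$ by $-\dot Z_2-\cP_k\cP_hj_2$ do you get the symmetric positive term $2\int_{t_n}^T\!\int_t^T\xi(s-t)\,a\big(A_h^l\dot Z_2(t),\dot Z_2(s)\big)\,ds\,dt$; and it is exactly this substitution, not merely the reversal of integration order, that produces the $j_2$-correction terms of \eqref{discretestabilitydual} (the $\cK$-weighted double integral in $\cP_k\cP_hj_2$ and the load term $(A_h^{l+1}Z_2,\cP_k\cP_hj_2)$) and, through the boundary value $\xi(0)=\kappa$, shifts the $Z_2$-coefficients to $\tilde\kappa$ at $t_n$ and $1+\kappa$ at $T$. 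You also pass silently over the cancellation of the terminal terms $(V_i(T),Z_i(T))$ against $(V_i(T),z_i^T)$, which requires the choice \eqref{initialdatadual} and the $\cP_h$-orthogonality. With these repairs your plan coincides with the paper's proof; as written, the identity would not come out.
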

\begin{proof}
1. The solution $Z$ of \eqref{dualcG} also satisfies \eqref{dualcG2}, for $n=N-1,\dots,1,0$. Then recalling Remark \ref{PkV} for the assumption \eqref{assumption}, we obviously have,
\begin{equation}   \label{Z1Z2}
  \cP_kZ_1=-\dot Z_2-\cP_k\cP_hj_2.
\end{equation}

2. Using this in \eqref{dualcG2} we obtain
\begin{equation*}
  \begin{split}
    \int_{t_n}^T\!\Big\{-(&V_1,\dot Z_1)+a(V_1,Z_2)
     -\int_t^T\!\cK(s-t)a\big(V_1,Z_2(s)\big)\,ds\Big\}\,dt
     +\big(V_1(T),Z_1(T)\big)\\
     &   +\big(V_2(T),Z_2(T)\big)
     =\int_{t_n}^T\!(V_1,j_1)\,dt
      +\big(V_1(T),z_1^T\big)
     +\big(V_2(T),z_2^T\big).
  \end{split}
\end{equation*}
For the convolution term we recall
$\cK(s-t)=-D_s\xi(s-t)$ from \eqref{xiproperty}, and then
partial integration yields
\begin{equation*}
  \begin{split}
    -\int_{t_n}^T\! 
     &\int_t^T\! \cK(s-t)a\big(V_1,Z_2(s)\big)\,ds\,dt\\
     &= -\int_{t_n}^T\!\int_t^T\!
     \xi(s-t)a\big(V_1,\dot Z_2(s)\big)\,ds\,dt
     +\int_{t_n}^T\!\xi(T-t)a\big(V_1,Z_2(T)\big)\,dt \\
     &\qquad -\kappa \int_{t_n}^T\!a\big(V_1,Z_2(t)\big)\,dt.
 \end{split}
\end{equation*}
These and $\tilde\kappa=1-\kappa$ imply that the solution $Z$ satisfies,
\begin{equation*}
  \begin{split}
    \int_{t_n}^T\!\Big\{-(V_1,\dot Z_1)
     &+\tilde \kappa a(V_1,Z_2)
      -\int_t^T\!\xi(s-t)a\big(V_1,\dot Z_2(s)\big)\,ds\\
     & +\xi(T-t)a(V_1,Z_2(T))\Big\}\,dt 
     +\big(V_1(T),Z_1(T)\big)
     +\big(V_2(T),Z_2(T)\big)\\
     &\!\!\!\!
      =\int_{t_n}^T\!(V_1,\cP_hj_1)\,dt+\big(V_1(T),z_1^T\big)
     +\big(V_2(T),z_2^T\big),
  \end{split}
\end{equation*}
Now we set $V_i=A_h^l\cP_kZ_i$, and recall the initial data 
\eqref{initialdatadual} such that the terms concerning the initial data 
are canceled by the definition of the orthogonal projection 
$\cP_h$. 
Then  we have
\begin{equation}   \label{stability:eq0}
  \begin{split}
    \int_{t_n}^T\!\Big\{&-(A_h^l\cP_kZ_1,\dot Z_1)
      +\tilde \kappa a(A_h^l\cP_kZ_1,Z_2)
      -\int_t^T\!\xi(s-t)a\big(A_h^l\cP_kZ_1,\dot Z_2(s)\big)\,ds\\
     & +\xi(T-t)a\big(A_h^l\cP_kZ_1,Z_2(T)\big)\Big\}\,dt
      =\int_{t_n}^T\!(A_h^l\cP_kZ_1,\cP_hj_1)\,dt.
  \end{split}
\end{equation}

3. We study the four terms at the left side of the above equation. 
For the first term we have
\begin{equation}   \label{stability:eq1}
    \int_{t_n}^T\!-(A_h^l\cP_kZ_1,\dot Z_1)\,dt
    =-\frac{1}{2}\|Z_1(T)\|_{h,l}^2
     +\frac{1}{2}\|Z_1(t_n)\|_{h,l}^2.
\end{equation}
With \eqref{Z1Z2} we can write the second term as
\begin{equation}   \label{stability:eq2}
  \begin{split}
    \tilde \kappa \int_{t_n}^T\! &a(A_h^l\cP_kZ_1,Z_2)\,dt\\
    &=-\tilde \kappa \int_{t_n}^T\!a(A_h^l\dot Z_2,Z_2)\,dt
      -\tilde \kappa \int_{t_n}^T\!
     a(A_h^l\cP_k\cP_hj_2,Z_2)\,dt\\
    &=-\frac{\tilde \kappa}{2}\|Z_2(T)\|_{h,l+1}^2
     +\frac{\tilde \kappa}{2}\|Z_2(t_n)\|_{h,l+1}^2
     -\tilde \kappa \int_{t_n}^T\!
     a(A_h^l\cP_k\cP_hj_2,Z_2)\,dt.
  \end{split}
\end{equation}
For the third term in \eqref{stability:eq0},
by virtue of \eqref{Z1Z2} and
integration by parts, we obtain
\begin{equation}   \label{stability:eq3}
  \begin{split}
    -&\int_{t_n}^T\!\int_t^T\!
     \xi(s-t)a\big(A_h^l\cP_kZ_1,\dot Z_2(s)\big)\,ds\,dt\\
    &=\int_{t_n}^T\!\int_t^T\!
     \xi(s-t)a\big(A_h^l\dot Z_2(t),\dot Z_2(s)\big)\,ds\,dt\\
    &\quad +\int_{t_n}^T\!\int_t^T\!
     \cK(s-t)a\big(A_h^l\cP_k\cP_hj_2,Z_2(s)\big)\,ds\,dt\\
    &\quad +\int_{t_n}^T\!
     \xi(T-t)a(A_h^l\cP_k\cP_hj_2,Z_2(T))\,dt
     -\kappa \int_{t_n}^T\!
     a\big(A_h^l\cP_k\cP_hj_2,Z_2(t)\big)\,dt.
  \end{split}
\end{equation}
Finally, for the last term at the left side of \eqref{stability:eq0}, we use \eqref{Z1Z2} and integration by
parts to have
\begin{equation}   \label{stability:eq4}
  \begin{split}
    &\int_{t_n}^T\!\xi(T-t)a\big(A_h^l\cP_kZ_1,Z_2(T)\big)\,dt\\
     &\ \ =\int_{t_n}^T\!
      \cK(T-t)a\big(A_h^l Z_2(t),Z_2(T)\big)\,dt 
       -\kappa\|Z_2(T)\|_{h,l+1}^2\\
     &\ \ \quad +\xi(T-t_n)a\big(A_h^lZ_2(t_n),Z_2(T)\big)
     -\int_{t_n}^T\!
       \xi(T-t)a\big(A_h^l\cP_k\cP_hj_2,Z_2(T)\big)\,dt.
   \end{split}
\end{equation}
Putting \eqref{stability:eq1}--\eqref{stability:eq4} in
\eqref{stability:eq0} we conclude the identity
\eqref{discretestabilitydual}.

4. Now we prove the estimate \eqref{discretestabilitydualineq}.
We recall, from \eqref{positivetype}, that $\xi$ is a positive type
kernel. 
Then, using the Cauchy-Schwarz inequality in  
\eqref{discretestabilitydual} and $\|\cK\|_{L_1(\IR^+)}=\kappa$, 
$\xi(t)\le \kappa$,  we get, for $C_3=C_3(\kappa)$ and $C_4=C_4(\kappa)$, 
\begin{equation*} 
  \begin{split}
    \|Z_1(t_n)&\|_{h,l}^2
     +\tilde{\kappa} \|Z_2(t_n)\|_{h,l+1}^2\\
    &\le \|Z_1(T)\|_{h,l}^2
     +(1+\kappa) \|Z_2(T)\|_{h,l+1}^2\\
    &\quad +C_1 \max_{t_n\le t\le T}\|Z_1\|_{h,l}^2
     +1/C_1 \Big(\int_{t_n}^T\!
      \|\cP_k\cP_hj_1\|_{h,l}\,dt\Big)^2\\
    &\quad +C_2 \max_{t_n\le t\le T}\|Z_2\|_{h,l+1}^2
     +1/C_2 \Big(\int_{t_n}^T\!
      \|\cP_k\cP_hj_2\|_{h,l+1}\,dt\Big)^2\\
    &\quad +C_3 \|Z_2(T)\|_{h,l+1}^2
      +1/C_3 \max_{t_n\le t\le T}\|Z_2\|_{h,l+1}^2\\
    &\quad +C_4 \|Z_2(T)\|_{h,l+1}^2
      +1/C_4 \|Z_2(t_n)\|_{h,l+1}^2.
  \end{split}
\end{equation*}
Using that, for piecewise linear functions, we have
\begin{equation}   \label{maxU}
  \max_{[0,T]}|U_i|\le \max_{0\le n\le N}|U_i(t_n)|,
\end{equation}
and
\begin{equation*} 
  \int_0^T\!|\cP_k f|\,dt \le \int_0^T\!|f|\,dt,
\end{equation*}
and that the above inequality holds for arbitrary $N$, in a standard way,
we conclude the estimate inequality \eqref{discretestabilitydualineq}.
Now the proof is complete.
\end{proof}

\section{A priori error estimation}
We define the standard interpolant $I_k$ with $I_kv$ belong to the space
of continuous piecewise linear polynomials, and
\begin{equation}\label{definitionIk}
  I_kv(t_n)=v(t_n),\quad n=0,\,1,\,\cdots,\,N.
\end{equation}
By standard arguments in approximation theory we see that, for $q=0,\,1$,
\begin{equation}\label{errorIk}
 \int_0^T\!\!\| I_{k}v-v \|_i \,dt
  \le C k^{q+1}\int_0^T\!\!\| D_t^{q+1}v \|_i\,dt,
   \quad \textrm{for}\,\,i=0,\,1,
\end{equation}
where $ k=\max_{1\le n\le N} k_n$.

We recall that we must specialize to the pure Dirichlet boundary 
condition and a convex polygonal domain to have the elliptic 
regularity \eqref{ellipticreg}, from which the error estimates 
\eqref{errorRh} hold true for the Ritz projections in \eqref{projections}. 
We note that the energy norm $\|\cdot\|_V$ is equivalent to $\|\cdot\|_1$
on $V$.

\begin{lemma} \label{LemmaBstarB}
Assume \eqref{assumption}. Then, for $V,W \in \cU_{hk}$, we have 
\begin{equation} \label{BstarB}
  \begin{split}
    B^*(\cP_k V,W)
    &=B(V,\cP_k W)\\
    &\quad 
      +\big(V_1(0),(W_1-\cP_k W_1)(0)\big)
      +\big(V_2(0),(W_2-\cP_k W_2)(0)\big)\\
    &\quad 
      -\big(V_1(T),W_1(T)\big)
      -\big(V_2(T),W_2(T)\big) \\
    &\quad 
      +\big((\cP_k V_1)(T),W_1(T)\big)
      +\big((\cP_k V_2)(T),W_2(T)\big).
  \end{split}
\end{equation}
\end{lemma}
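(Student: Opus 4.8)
The plan is to prove the identity \eqref{BstarB} by direct computation, exploiting the relation \eqref{BequalBstar} between $B$ and $B^*$ together with the defining property of the $L_2$-projection $\cP_k$ in time. The essential structural fact is that both $B$ and $B^*$ are, away from the boundary/initial terms, built from a time integral of a bilinear integrand, and that the difference between acting with $\cP_k$ on the first slot versus the second slot of such an integrand is controlled entirely by the endpoint behavior, since $\cP_k$ is an orthogonal projection onto piecewise constants in time.

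First I would write out $B^*(\cP_k V, W)$ explicitly from the definition \eqref{BLdual} with $\tau=0$, substituting $v=\cP_k V$ and $z=W$. The terms involving $\dot z_1=\dot W_1$ and $\dot z_2=\dot W_2$ are the ones where the projection in the first argument interacts with a time derivative in the second. For each such term, the key move is to use the orthogonality $\int_{I_n}(\cP_k V_i - V_i)\cdot\psi\,dt=0$ for piecewise-constant $\psi$: since $\dot W_i$ is piecewise constant in $t$ (because $W\in\cU_{hk}$ is piecewise linear in time), I can replace $\cP_k V_i$ by $V_i$ inside integrals of the form $\int_0^T (\cP_k V_i, \dot W_i)\,dt$ at the cost of nothing — they are literally equal. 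This is the mechanism that converts $B^*(\cP_k V, W)$ into something expressible through $B(V, \cP_k W)$ plus correction terms.

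Next I would integrate by parts in time on the terms carrying $\dot W_i$, moving the derivative onto $V_i$ (or onto $\cP_k V_i$) and thereby generating endpoint contributions at $t=0$ and $t=T$; these endpoint contributions are exactly the six boundary terms on the right side of \eqref{BstarB}. Simultaneously, for the spatial bilinear-form terms $a(\cdot,\cdot)$ and the convolution term, I would invoke the symmetry already recorded in \eqref{BequalBstar}, namely $B(u,v)=B^*(u,v)$ on the appropriate spaces, after swapping the roles of the two arguments and changing the order of the double time integral in the memory term (replacing $\int_0^t\cK(t-s)\,\cdots$ by $\int_t^T\cK(s-t)\,\cdots$). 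The point is that these interior terms are insensitive to whether $\cP_k$ sits on the first or second argument, because $\cP_k$ commutes through the piecewise-constant test structure and $a(\cdot,\cdot)$ is symmetric in its spatial arguments. I would assemble $B(V,\cP_k W)$ by collecting precisely these interior contributions, and verify that what remains are the initial-data pairings $\big(V_i(0),(W_i-\cP_k W_i)(0)\big)$ and the terminal pairings $-\big(V_i(T),W_i(T)\big)+\big((\cP_k V_i)(T),W_i(T)\big)$.

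The main obstacle I anticipate is bookkeeping of the boundary terms: the definitions of $B$ and $B^*$ place the initial/terminal data pairings at opposite ends of $[0,T]$ and with opposite signs, so when I integrate by parts and shuffle the projection between arguments I must track carefully which endpoint each term lands on and whether it carries $V_i$, $\cP_k V_i$, $W_i$, or $\cP_k W_i$. In particular the asymmetry between the $(W_i-\cP_k W_i)(0)$ correction at the initial time and the $(\cP_k V_i)(T)$ correction at the terminal time arises because $\cP_k$ is applied to different arguments in $B^*(\cP_k V,W)$ versus $B(V,\cP_k W)$, and I expect the reconciliation of these endpoint values — using that piecewise-linear functions and their $\cP_k$-projections need not agree at $t=0$ or $t=T$ — to be the delicate step. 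The convolution term should cause no trouble beyond the order-of-integration swap, since $\cP_k V_1$ enters it without any time derivative, but I would double-check that the memory integrand contributes no spurious endpoint terms under the change of integration order.
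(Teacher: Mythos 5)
Your proposal is correct relative to the paper and follows essentially the same route as the paper's proof: expand $B^*(\cP_k V,W)$ from \eqref{BLdual}, use the $L_2(I_n)$-orthogonality of $\cP_k$ against the piecewise-constant-in-time factors ($\dot W_i$, $\cP_k W_i$, $\dot V_i$) to shift the projection between the two arguments, and integrate by parts in time so that the endpoint contributions at $t=0$ and $t=T$ assemble into exactly the six boundary terms of \eqref{BstarB}. The convolution term, which you rightly single out as the step to double-check, is treated in the paper with the same brevity you propose --- the projection is moved from the first to the second argument ``by the definition of $\cP_k$'' before the order of integration is swapped --- so your outline coincides with the paper's argument step for step, including its handling of that delicate point.
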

\begin{proof}
We recall Remark 4 for the assumption \eqref{assumption}, 
and the definition of the bilinear forms $B,B^*$ from \eqref{BL} and 
\eqref{BLdual}. Then by the definition of $\cP_k$ and 
partial integration in time we have
\begin{equation*}
  \begin{split}
    B^*&(\cP_k V,W)\\
    &=\int_0^T\!\Big\{
      -(V_1,\dot W_1)+a(V_1, \cP_k W_2)
      -\int_t^T\! \cK(s-t)a(V_1, \cP_k W_2(s))\ ds \\
    &\qquad\quad\ \  -(V_2,\dot W_2)-(V_2,\cP_k W_1) \Big\}\ dt \\
    &\quad 
      +\big((\cP_k V_1)(T),W_1(T)\big)
      +\big((\cP_k V_2)(T),W_2(T)\big)\\
    &=\int_0^T\!\Big\{
      (\dot V_1, W_1)+a(V_1, \cP_k W_2)
      -\int_t^T\! \cK(s-t)a(V_1, \cP_k W_2(s))\ ds \\
    &\qquad\quad\ \  (\dot V_2,W_2)-(V_2,\cP_k W_1) \Big\}\ dt \\
    &\quad 
      +\big(V_1(0),W_1(0)\big)
      +\big(V_2(0),W_2(0)\big)
      -\big(V_1(T),W_1(T)\big)
      -\big(V_2(T),W_2(T)\big) \\
    &\quad 
      +\big((\cP_k V_1)(T),W_1(T)\big)
      +\big((\cP_k V_2)(T),W_2(T)\big),
  \end{split}
\end{equation*}
that implies \eqref{BstarB}, and the proof is complete.
\end{proof}
\begin{theorem}
Assume that $\Omega$ is a convex polygonal domain,
and \eqref{assumption}.
Let $u$ and $U$ be the solutions of \eqref{weakformprimary} and
\eqref{cG}.
Then, with $e=U-u$ and $C=C(\kappa)$, we have
\begin{equation} \label{apriorierror1}
  \begin{split}
    \|e_1(T)\|
       &\le C h^{2}\Big(\|u^0\|_2+\|u_1(T)\|_{2}
         +\int_0^T\!\|\dot u_1\|_2 \,dt\Big)\\
       &\quad +C k^{2}\!\int_0^T\!\big(\| \ddot{u}_2\|
             +\| \ddot{u}_1\|_1 \big)\,dt,
  \end{split}
\end{equation}
and, with a quasi-uniform family of triangulations, 
\begin{equation} \label{apriorierror2}
  \begin{split}
     \|e_1(T)\|_1
       &\le C h\Big(\|u^0\|_1+\|u_1(T)\|_2
         +\int_0^T\!\|\dot u_1\|_2 \,dt\Big)\\
       &\quad +C k^{2}\!\int_0^T\!\big(\| \ddot{u}_2\|_1
             +\| \ddot{u}_1\|_2 \big)\,dt ,
  \end{split}
\end{equation}
\begin{equation} \label{apriorierror3}
  \begin{split}
      \|e_2(T)\|
       &\le C h\Big(\|u^0\|_2+\|u_2(T)\|_{1}
         +\int_0^T\!\|\dot u_1\|_2 \,dt\Big)\\
       &\quad +C k^2\!\int_0^T\!\big(\| \ddot{u}_2\|_1
             +\| \ddot{u}_1\|_2 \big)\,dt.
  \end{split}
\end{equation}

\end{theorem}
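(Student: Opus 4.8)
The plan is to run a duality argument driven by the stability estimate \eqref{discretestabilitydualineq} for the discrete dual problem, combined with the Galerkin orthogonality \eqref{Galerkinorthogonality} and the identity of Lemma \ref{LemmaBstarB}. First I would split the error as $e=U-u=\theta+\eta$, where $\eta=\pi u-u$ is the projection/interpolation error with $\pi=\cR_h I_k$ (the Ritz projection in space composed with the temporal interpolant of \eqref{definitionIk}) and $\theta=U-\pi u\in\cU_{hk}$ is the discrete remainder. The part $\eta$ is controlled directly by the spatial estimate \eqref{errorRh} and the temporal estimate \eqref{errorIk}, so in particular $\|\eta_i(T)\|$ and $\|\eta_i(T)\|_1$ are of the claimed orders, and the whole task reduces to estimating the relevant norm of $\theta(T)$.

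To capture that norm I would take $Z$ to be the solution of the discrete dual problem \eqref{dualcG}, with data chosen to represent the target functional: for \eqref{apriorierror1} take vanishing dual loads $j_1=j_2=0$ and final data encoding $\theta_1(T)$ at discrete level $l=0$, while for \eqref{apriorierror2} and \eqref{apriorierror3} one shifts the index $l$ appearing in the discrete norms $\|\cdot\|_{h,l}$ and adjusts the final data so that the functional extracts $\|\theta_1(T)\|_1$ and $\|\theta_2(T)\|$, respectively (recall $Z_2$ enters the stability estimate at level $l+1$). Testing \eqref{dualcG} with $V=\cP_k\theta\in\cV_{hk}$, which is admissible by Remark \ref{PkV} under \eqref{assumption}, gives $B^*(\cP_k\theta,Z)=L^*(\cP_k\theta)$, and Lemma \ref{LemmaBstarB} rewrites the left-hand side as $B(\theta,\cP_k Z)$ plus boundary contributions at $t=0$ and $t=T$.

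Since $\cP_k Z\in\cV_{hk}$, the Galerkin orthogonality \eqref{Galerkinorthogonality} gives $B(\theta,\cP_k Z)=-B(\eta,\cP_k Z)$, turning the identity into an error representation whose right-hand side involves only the computable projection error $\eta$ and the dual solution $Z$. I would then bound $B(\eta,\cP_k Z)$ term by term with Cauchy--Schwarz: the factors carrying $\eta$ produce the powers $h^2$ (respectively $h$) and $k^2$ through \eqref{errorRh} and \eqref{errorIk}, while the factors carrying $Z$ are controlled, via the stability estimate \eqref{discretestabilitydualineq}, by the dual final data, i.e.\ by the chosen functional itself. Because the Ritz projection annihilates the $a(\cdot,\cdot)$ part of $\eta$, the elliptic and convolution terms in $B$ simplify, and the memory term is treated with the same positive-type/partial-integration manipulation (using $\cK=-D_t\xi$) employed in the stability proof.

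The main obstacle is the bookkeeping that matches each target norm to the correct index $l$ in $\|\cdot\|_{h,l}$ and to the correct dual data while keeping the interpolation orders balanced; in particular the $H^1$ estimate \eqref{apriorierror2} loses one power of $h$ and relies on an inverse estimate, which is precisely why the quasi-uniform hypothesis is invoked there. A secondary difficulty is controlling the boundary terms generated by Lemma \ref{LemmaBstarB}: those at $t=0$ are eliminated or absorbed by the initial choice $u_h^0=\cP_h u^0$, $u_h^1=\cP_h u^1$ from \eqref{initialdata} together with the properties of $\cP_h$, whereas those at $t=T$ must be reorganized so that $\theta(T)$ reappears and is absorbed into the left-hand side, exactly as the functional was designed to produce. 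Finally, adding back $\|\eta(T)\|$ via \eqref{errorRh} and \eqref{errorIk} yields \eqref{apriorierror1}--\eqref{apriorierror3}.
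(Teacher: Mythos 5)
Your overall strategy is indeed the paper's: test the discrete dual problem \eqref{dualcG} with $V=\cP_k\theta$, convert $B^*(\cP_k\theta,Z)$ into $B(\theta,\cP_k Z)$ plus boundary terms via Lemma \ref{LemmaBstarB}, remove $B(\theta,\cP_k Z)$ up to data terms by the Galerkin orthogonality \eqref{Galerkinorthogonality}, pick final dual data weighted by negative powers of $A_h$ to extract the target norms (the paper takes $Z_1(T)=A_h^{-l}\theta_1(T)$, $Z_2(T)=A_h^{-(l+1)}\theta_2(T)$ with $l=0$ for \eqref{apriorierror1} and $l=-1$ for \eqref{apriorierror2}--\eqref{apriorierror3}), and close with the stability estimate \eqref{discretestabilitydualineq}. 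But your error splitting contains a genuine gap. You take $\pi=\cR_h I_k$ for \emph{both} components, whereas the paper splits $e=\theta+\eta+\omega$ with the Ritz projection $\cR_h$ only on the displacement $u_1$ and the $L_2$-projection $\cP_h$ on the velocity $u_2$ (see \eqref{choice}). That componentwise choice is what makes the terms $(\omega_2,\cP_kZ_1)$, $(\dot\omega_2,\cP_kZ_2)$ and $(\omega_2(0),Z_2(0))$ vanish identically by $\cP_h$-orthogonality, and the three-way split makes all $\dot\eta_i$ terms vanish because $\int_{I_n}\dot\eta_i\,dt=0$ (nodal interpolation) against test functions that are piecewise constant in time. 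With your choice the term $\int_0^T(\dot\omega_2,\cP_kZ_2)\,dt$ with $\omega_2=(\cR_h-I)u_2$ survives and can only be bounded by $Ch^2\int_0^T\|\ddot u_1\|_2\,dt$ --- an extra time derivative of spatial regularity absent from \eqref{apriorierror1}, and precisely the suboptimality (compare the discussion of \cite{AdolfssonEnelundLarssonRacheva2006} and \cite{LinThomeeWahlbin1991} in the introduction) that this theorem is designed to avoid. As written, your plan therefore proves a weaker statement than the one claimed.

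Second, your explanation of the quasi-uniformity hypothesis is off. The paper applies no inverse estimate; note that the crude route $\|\theta_1(T)\|_1\le Ch^{-1}\|\theta_1(T)\|$ combined with the $L_2$ bound would produce $k^2/h$ rather than $k^2$ in \eqref{apriorierror2}, so it cannot yield the stated result. The correct mechanism --- which you do also mention --- is the shift to $l=-1$ in the discrete norms; quasi-uniformity enters only to guarantee the $H^1$-stability of the $L_2$-projection, $\|\cP_h v\|_1\le C\|v\|_1$ in \eqref{H1stability}, needed to pass from discrete norms of projected quantities to continuous $H^1$ norms of $u$. Indeed the paper remarks that this hypothesis can be relaxed, which would be impossible if an inverse inequality were structurally required. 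A minor further point: in the a priori step the convolution term carrying $\eta_1$ is bounded directly by Cauchy--Schwarz and $\|\cK\|_{L_1(\IR^+)}=\kappa$; the positive-type/partial-integration manipulation with $\xi$ is needed only inside the stability proof for the dual problem, which you may invoke as a black box here.
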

\begin{proof}
1. We recall Remark \ref{PkV} for the assumption \eqref{assumption}.
We set 
\begin{equation} \label{errorsplit}
  e=U-u=(U-I_k\pi_h u)+(I_k\pi_h u-\pi_h u)+(\pi_h u-u)=\theta+\eta+\omega,
\end{equation}
where $I_k$ is the linear interpolant defined by \eqref{definitionIk}, 
and $\pi_h$ is in terms of the projectors 
$\cR_h$ and $\cP_h$, such that 
\begin{equation} \label{choice}
  \begin{aligned}
    &\theta_1=U_1-I_k\cR_hu_1,
    &\eta_1=(I_k-I)\cR_hu_1,
    &&\omega_1=(\cR_h-I)u_1,\\
    &\theta_2=U_2-I_k\cP_h u_2,
    &\eta_2=(I_k-I)\cP_h u_2, 
    &&\omega_2=(\cP_h-I)u_2.
  \end{aligned}
\end{equation}
We note that $\eta$ and $\omega$ can be estimated by 
\eqref{errorIk} and \eqref{errorRh}, and therefore we need to estimate $\theta$. 

2. Now, putting $V=\cP_k\theta$ in \eqref{dualcG} with $j_1=j_2=0$, 
we have
\begin{equation} \label{apriori:eq1}
  \begin{split}
    L^*(\cP_k\theta)
    &=\big((\cP_k\theta_1)(T),z_1^T\big)
      +\big((\cP_k\theta_2)(T),z_2^T\big)
    =B^*(\cP_k\theta,Z), 
  \end{split}
\end{equation}
that, using Lemma \ref{LemmaBstarB} and the initial data \eqref{initialdatadual}, 
implies
\begin{equation*}
  \begin{split}
    \big(\theta_1(T),&Z_1(T)\big)
     +\big(\theta_2(T),Z_2(T)\big)\\
    &=B(\theta,\cP_k Z)
      +\big(\theta_1(0),(Z_1-\cP_k Z_1)(0)\big)
      +\big(\theta_2(0),(Z_2-\cP_k Z_2)(0)\big)\\
    &=\int_0^T\!
       \Big\{(\dot \theta_1,\cP_kZ_1)
      -(\theta_2,\cP_kZ_1)
      +(\dot \theta_2,\cP_kZ_2)
      +a(\theta_1,\cP_kZ_2)\\
    &\quad\quad -\int_0^t\!\cK(t-s)
    a\big(\theta_1(s),\cP_kZ_2\big)\,ds\Big\}\,dt\\
    &\quad +\big(\theta_1(0),Z_1(0)\big)
      +\big(\theta_2(0),Z_2(0)\big). 
  \end{split}
\end{equation*}
Then, using $\theta=e-\eta-\omega$ and the Galerkin orthogonality 
\eqref{Galerkinorthogonality}, we have 
\begin{equation*}
  \begin{split}
    \big(\theta_1(T),&Z_1(T)\big)
     +\big(\theta_2(T),Z_2(T)\big)\\
    &=\int_0^T\!
       \Big\{-(\dot \eta_1,\cP_kZ_1)
      +(\eta_2,\cP_kZ_1)
      -(\dot \eta_2,\cP_kZ_2)
      -a(\eta_1,\cP_kZ_2)\\
    &\quad\quad\quad  +\int_0^t\!\cK(t-s)
    a\big(\eta_1(s),\cP_kZ_2\big)\,ds\Big\}\,dt\\
    &\quad\quad -\big(\eta_1(0),Z_1(0)\big)
      -\big(\eta_2(0),Z_2(0)\big)\\
    &\quad +\int_0^T\!
       \Big\{-(\dot \omega_1,\cP_kZ_1)
      +(\omega_2,\cP_kZ_1)
      -(\dot \omega_2,\cP_kZ_2)
      -a(\omega_1,\cP_kZ_2)\\
    &\quad\quad\quad  +\int_0^t\!\cK(t-s)
    a\big(\omega_1(s),\cP_kZ_2\big)\,ds\Big\}\,dt\\
    &\quad\quad -\big(\omega_1(0),Z_1(0)\big)
      -\big(\omega_2(0),Z_2(0)\big). 
  \end{split}
\end{equation*}
By the definition of $\eta$, that indicates the temporal 
interpolation error, terms including $\dot\eta_i$ and $\eta_i(0)$ vanish.
We also use the definition of $\omega$ in \eqref{choice}, 
that indicates the spatial projection error, and we conclude
\begin{equation*}
  \begin{split}
    \big(\theta_1(T),&Z_1(T)\big)
     +\big(\theta_2(T),Z_2(T)\big)\\
    &=\int_0^T\!
       \Big\{(\eta_2,\cP_kZ_1)
      -a(\eta_1,\cP_kZ_2)+\int_0^t\!\cK(t-s)
    a\big(\eta_1(s),\cP_kZ_2\big)\,ds\Big\}\,dt\\
    &\quad -\int_0^T\!  (\dot \omega_1,\cP_kZ_1)\,dt
                 -\big(\omega_1(0),Z_1(0)\big),
  \end{split}
\end{equation*}
that, setting the initial data $Z_1(T)=\cP_h z_1^T=A_h^{-l}\theta_1(T)$ and  
$Z_2(T)=\cP_h z_2^T=A_h^{-(l+1)}\theta_2(T)$, $ l \in \IR$ and using the Cauchy-Schwarz 
inequality, we have
\begin{equation} \label{thetaineq-1}
  \begin{split}
    \|&\theta_1(T)\|_{h,-l}^2+ \|\theta_2(T)\|_{h,-(l+1)}^2\\
    &\le
    C_1\max_{0\le t\le T}\|\cP_kZ_1\|_{h,l}^2
    +1/C_1\Big(\int_0^T\!\|\eta_2\|_{h,-l}\,dt\Big)^2\\
    &\quad +C_2\max_{0\le t\le T}\|\cP_kZ_2\|_{h,l+1}^2
    +1/C_2\Big(\int_0^T\!\|\eta_1\|_{h,-l+1}\,dt\Big)^2\\
    &\quad +C_3\max_{0\le t\le T}\|\cP_kZ_2\|_{h,l+1}^2
    +1/C_3\Big(\int_0^T\!
    \big(\cK*\|\eta_1\|_{h,-l+1}\big)(t)\,dt\Big)^2\\
    &\quad +C_4\max_{0\le t\le T}\|\cP_kZ_l\|_{h,l}^2
    +1/C_4\Big(\int_0^T\!\| \cP_h \dot\omega_1\|_{h,-l}\,dt\Big)^2\\
    &\quad +C_5\|Z_1(0)\|_{h,l}^2
    +1/C_5\| \cP_h \omega_1(0)\|_{h,-l}^2.
  \end{split}
\end{equation}

On the other hand, putting the initial data 
$Z_1(T)=A_h^{-l}\theta_1(T)$ and  
$Z_2(T)=A_h^{-(l+1)}\theta_2(T)$ in the stability estimate 
\eqref{discretestabilitydualineq} with $j_1=j_2=0$, we have 
\begin{equation*}   
  \begin{split}
    \|Z_1(t_n)\|_{h,l}+\|Z_2(t_n)\|_{h,l+1}
    &\le C\big\{\| \theta_1(T)\|_{h,-l}+\| \theta_2(T)\|_{h,-(l+1)}\big\}.
  \end{split}
\end{equation*}
Using this, together with \eqref{maxU}, and $\|\cK\|_{L_1(\IR^+)}=\kappa$ 
in \eqref{thetaineq-1}, in a standard way, we have
\begin{equation} \label{thetaineq-2}
  \begin{split}
    \|&\theta_1(T)\|_{h,-l}+ \|\theta_2(T)\|_{h,-(l+1)}\\
    &\le C \Big\{\| \cP_h \omega_1(0)\|_{h,-l}
      +\int_0^T\!\Big(
       \| \eta_2\|_{h,-l}+\|\eta_1\|_{h,-l+1}
       +\| \cP_h \dot \omega_1\|_{h,-l}\Big)\,dt \Big\}.
  \end{split}
\end{equation}

3. To prove the first error estimate 
\eqref{apriorierror1}, we set $l=0$, and we recall the facts that 
$\|\cdot\|_{h,0}=\|\cdot\|$, $\|\cdot\|_{h,1}=\|\cdot\|_1$. 
Then, recalling $e(T)=\theta(T)+\eta(T)+\omega(T)=\theta(T)+\omega(T)$, 
and $L_2$-stability of the projection $\cP_h$, we have
\begin{equation*}
  \begin{split}
    \|e_1(T)\|
    &\le C \Big\{\|(\cR_h-I)u^0\| + \|(\cR_h-I)u_1(T)\|\\
    &\quad +\int_0^T\!\Big(
       \|(I_k-I)u_2\|+\|(I_k-I)u_1\|_1
       +\|(\cR_h-I)\dot u_1\|\Big)\,dt \Big\}.
  \end{split}
\end{equation*}
This completes the proof of the first  a priori error estimate 
\eqref{apriorierror1} by \eqref{errorIk} and \eqref{errorRh}.

4. Now, to prove the last two error estimates 
\eqref{apriorierror2}--\eqref{apriorierror3}, we set $l=-1$ in \eqref{thetaineq-2}, 
and we recall the assumption of having a quasi-uniform  family of triangulations. 
Then $H^1$-stability of the $L_2$-projection $\cP_h$, that is
\begin{equation} \label{H1stability}
  \| \cP_h v\|_1 \leq C \|v\|_1, \quad v \in H^1,
\end{equation}
holds true. 
Hence, recalling $e(T)=\theta(T)+\omega(T)$, we have
\begin{equation*}
  \begin{split}
    \|e_1(T)\|_1
    &\le C \Big\{\|(\cR_h-I)u^0\|_1 + \|(\cR_h-I)u_1(T)\|_1\\
    &\quad +\int_0^T\!\Big(
       \|(I_k-I)u_2\|_1+\|(I_k-I)u_1\|_2
       +\|(\cR_h-I)\dot u_1\|_1\Big)\,dt \Big\}\\
    \|e_2(T)\|
    &\le C \Big\{\|(\cR_h-I)u^0\|_1 + \|(\cR_h-I)u_2(T)\|\\
    &\quad +\int_0^T\!\Big(
       \|(I_k-I)u_2\|_1+\|(I_k-I)u_1\|_2
       +\|(\cR_h-I)\dot u_1\|_1\Big)\,dt \Big\}.
  \end{split}
\end{equation*}
This completes the proof of the error estimates 
\eqref{apriorierror2}--\eqref{apriorierror3} by \eqref{errorIk} and \eqref{errorRh}.
Now the proof is complete. 
\end{proof}

We note that the assumption of quasi-uniformity for validity of 
\eqref{H1stability}, that is used for error estimates 
\eqref{apriorierror2}--\eqref{apriorierror3}, can be relaxed, see 
\cite{Carstensen2004}, though it is not an considerable restriction 
in a priori error analysis. 

\section{Numerical example}
Here we verify the order of convergence of the cG(1)cG(1) 
method by a simple 
example for a one dimensional problem with smooth 
convolution kernel.  
Another example for two dimensional case with similar results, 
with fractional order kernels of Mittag-Leffler type,  
can be found in \cite{StigFardin2010}. 

We consider a decaying exponential kernel with 
$\|\cK\|_{L_1(\IR^+)}=\kappa=0.5$, the initial 
data $u^0=u^1=0$, and load term $f=0$. We set homogeneous Dirichlet 
boundary condition at $x=0$ and a constant Neumann boundary 
condition at the end point $x=1$, toward negative $y$ axis. 
Figure 1 shows that the method preserves the behaviour of the 
model problem. 

In Figure 2, we have verified numerically the spatial rate of convergence 
$O(h^2)$ for $L_2$-norm of the displacement.  
In the lack of an explicit solution we compare with a numerical 
solution with fine mesh sizes $h,k$. 
Here $h_{min}= 0.0078$ and $k_{min}=0.017$. 
The result for temporal order of convergence, $O(k^2)$, is similar. 
\begin{figure}[htbp]
   \begin{center}
       \includegraphics[width=12cm,height=6cm]{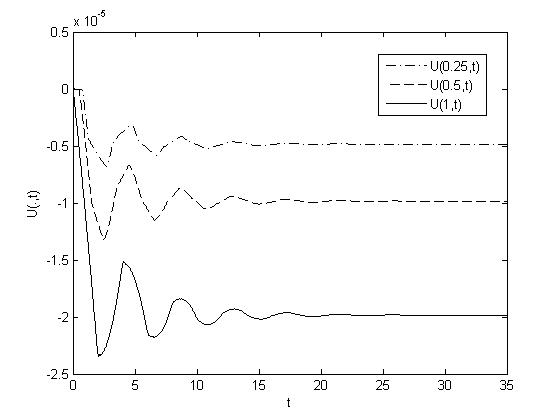}
       \caption{Damping of the oscillating material: at points $x=0.25, 0.5, 1$.}
   \end{center}
\end{figure}
\begin{figure}[htbp]
   \begin{center}
       \includegraphics[width=12cm,height=5cm]{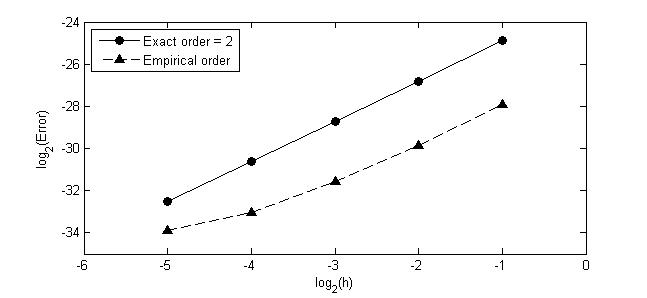}
       \caption{Order of convergence for spatial discretization}
   \end{center}
\end{figure}

\bibliographystyle{amsplain}
\bibliography{cGApriori2013March8}

\end{document}